\newcommand{\condrel}{\kappa_r}
\newcommand{\condabs}{\kappa_a}
\newcommand{\Rset}{\mathbb{R}}
\newcommand{\Cset}{\mathbb{C}}
\newcommand{\Zset}{\mathbb{Z}}
\newcommand{\Nset}{\mathbb{N}}
\newcommand{\Qset}{\mathbb{Q}}
\newcommand{\iu}{\ensuremath{\mathrm{i}}}
\newcommand{\du}{\ensuremath{\mathrm{d}}}
\newcommand{\Arg}{\mathop{\rm Arg}}
\newtheorem{thm}{Theorem}
\newtheorem{prop}{Proposition}
\theoremstyle{definition}
\newtheorem{defn}[thm]{Definition}
\begin{document}

\title{Computing the matrix Mittag--Leffler function with applications to fractional calculus 
\footnote{This is the post-print of the paper: R.Garrappa, M.Popolizio, ``Computing the matrix Mittag--Leffler function with applications to fractional calculus'', \emph{Journal of Scientific Computing} (Springer), October 2018, Volume 77, Issue 1, pp 129-153, doi: 10.1007/s10915-018-0699-5, avialable at \url{https://doi.org/10.1007/s10915-018-0699-5}. 
This work is supported under the GNCS-INdAM 2017 project ``Analisi numerica per modelli descritti da operatori frazionari''.}
}

\author{
Roberto Garrappa \\
\small Universit\`a degli Studi di Bari, Dipartimento di Matematica, Bari, Italy\\ 
\small Member of the INdAM Research group GNCS \\
\small \texttt{roberto.garrappa@uniba.it}
\and
Marina Popolizio \\
\small Universit\`a del Salento, Dipartimento di Matematica e Fisica ``Ennio De Giorgi''. Lecce, Italy \\ \small Member of the INdAM Research group GNCS \\
\small \texttt{marina.popolizio@unisalento.it}
}

\date{Submitted on June 5$^{\text{th}}$, 2017} 

\maketitle

\begin{abstract}
The computation of the Mittag-Leffler (ML) function with matrix arguments, and some applications in fractional calculus, are discussed. In general the evaluation of a scalar function in matrix arguments may require the computation of derivatives of possible high order depending on the matrix spectrum. Regarding the ML function, the numerical computation of its derivatives of arbitrary order is a completely unexplored topic; in this paper we address this issue and three different methods are tailored and investigated. The methods are combined together with an original derivatives balancing technique in order to devise an algorithm capable of providing high accuracy. The conditioning of the evaluation of matrix ML functions is also studied. The numerical experiments presented in the paper show that the proposed algorithm provides high accuracy, very often close to the machine precision.

{\bf Keywords:} Mittag--Leffler function, Matrix function, Derivatives of the Mittag--Leffler function, Fractional calculus, Conditioning.
\end{abstract}




\section{Introduction}

When the Swedish mathematician Magnus Gustaf Mittag-Leffler introduced, at the beginning of the twentieth century, the function that successively would have inherited his name \cite{MittagLeffler1904,MittagLeffler1905}, he perhaps ignored the importance it would have gained several years later; indeed, although the introduction of the Mittag-Leffler (ML) function was motivated just by the analysis of divergent series, this function has nowadays a fundamental role in the theory of operators of fractional (i.e., non integer) order and it has become so important in this context as to be defined the ``Queen function of fractional calculus'' \cite{GorenfloKilbasMainardiRogosin2014,GorenfloMainardi1997,MainardiMuraPagnini2010}. 

For complex parameters $\alpha$ and $\beta$, with $\Re(\alpha) > 0$, the ML function is defined by means of the series
\begin{equation}\label{eq:ML}
	E_{\alpha,\beta}(z) = \sum_{j=0}^{\infty} \frac{z^{j}}{\Gamma(\alpha j + \beta)}
	, \quad z \in \Cset ,
\end{equation}
where $\Gamma(\cdot)$ is the Euler's gamma function. $E_{\alpha,\beta}(z)$ is an entire function of order $\rho=1/\Re(\alpha)$ and type $\sigma=1$ and it is clearly a generalization of the exponential function to which it reduces when $\alpha=\beta=1$ since for $j\in\Nset$ it is $\Gamma(j+1) = j!$. Throughout the paper we consider real values for $\alpha$ and $\beta$ which is the case of interest for common applications.

Despite the great interest in fractional calculus, few works have so far concerned the accurate evaluation of the ML function. Indeed this is in and of itself challenging and expensive and few efficient algorithms for this task have been devised only recently (e.g., see \cite{Garrappa2015_SIAM,GarrappaPopolizio2013,GorenfloLoutchkoLuchko2002,ValerioMachado2014,ZengChen2015}).

The ML function with matrix arguments is just as valuable as its scalar version and it can be successfully employed in several applications: for the efficient and stable solution of systems of fractional differential equations (FDEs), to determine the solution of certain multiterm FDEs, in control theory and in other related fields (e.g., see \cite{GarrappaMoretPopolizio2015,GarrappaMoretPopolizio2017,GarrappaPopolizio2011_MCS,MoretNovati2011,Popolizio2018,Rodrigo2016,WeidemanTrefethen2007}).

The aim of this paper is to discuss numerical techniques for the evaluation of the ML function with matrix arguments; in particular we are interested in methods working with high accuracy, if possible very close to the machine precision, in order to provide results which can be considered virtually exact in finite precision arithmetic.

Incidentally, and under a more general perspective, we cannot get out of mentioning the recent interest in the numerical approximation of matrix functions for applications in a wide range of areas, for which we refer the reader to the textbook by Higham \cite{Higham2008} and the readable papers \cite{DelBuonoLopezPoliti2008,FrommerSimoncini2008,HaleHighamTrefethen2008,HighamAlMohy2010}.

For general matrix functions the Schur--Parlett algorithm \cite{DaviesHigham2003} represents the most powerful method. It is based on the Schur decomposition of the matrix argument combined with the Parlett recurrence to evaluate the matrix function on the triangular factor \cite{GolubVanLoan1996}. Additionally, for the diagonal part, the Taylor series of the scalar function is used. This last task requires the knowledge of the derivatives of the underlying scalar function up to an order depending on the eigenvalues of the matrix argument. In particular, high orders are needed in the presence of multiple or highly clustered eigenvalues. This is a crucial issue which complicates the matrix case with respect to the scalar one.

Facing the evaluation of derivatives of the ML function, as requested by the Schur--Parlett algorithm, is a demanding task other than a rather unexplored topic (except, as far as we know, for one work dealing with just the first order derivative \cite{GorenfloLoutchkoLuchko2002}). A great portion of this paper is therefore devoted to discuss and deeply investigate three different methods for the evaluation of derivatives of the ML function; the rationale for introducing different methods is that each of them properly works in limited regions of the complex plane and for different parameter ranges. Thanks to our investigation, we are therefore able to tune a combined algorithm which applies, in an accurate way, to matrices with any eigenvalues location.

Besides, we analyze the conditioning of the computation of matrix ML functions to understand the sensitivity of the matrix function to perturbations in the data. We thus give a first contribution on this topic which we think can be of interest for readers interested in numerical applications of the matrix ML function, where rounding errors and perturbations are inevitable.

This paper is organized as follows: in Section \ref{S:Applications} we survey some of the most common applications of the ML function with matrix arguments. In Section \ref{S:MatrixFunctionSchurParlett} we discuss the generalization of a scalar function to matrix arguments and we review the Schur-Parlett algorithm for its numerical computation. Some methods for the accurate and efficient evaluation of derivatives of the ML function, as requested by the Schur-Parlett algorithm, are hence described in Section \ref{S:DerivativeML}: we study in detail each method in order to identify strengths and weaknesses and provide some criteria for the selection of the most suitable method in each situation. The conditioning of the ML function is studied in Section \ref{S:Conditioning} and, finally, in Section \ref{S:Experiments} we present the results of some numerical experiments.

\section{Applications of matrix Mittag-Leffler functions}\label{S:Applications}

The evaluation of matrix ML functions is not just a curiosity-driven problem; several practical applications can indeed benefit from calculating the value of $E_{\alpha,\beta}$ in matrix arguments. 

This section presents some important applications involving the numerical evaluation of matrix ML functions. The aim is to give just a flavor of the assorted fields in which these objects are required, while for their thorough discussion we refer the reader to the existing literature (e.g., see \cite{GorenfloKilbasMainardiRogosin2014,HauboldMathaiSaxena2011,Rogosin2015}).

\subsection{Solution of systems of FDEs} 

The matrix ML function is crucial to explicitly represent the solution of a linear system of FDEs of order $\alpha >0$
\begin{equation}\label{eq:FDE_Linear}
	D^{\alpha}_{t} Y(t) = A Y(t)  , \quad Y^{(\ell)}(0) = Y^{\ell}_0, \quad \ell=0,\ldots,m-1 ,
\end{equation}
where $A\in\Rset^{n\times n}$, $Y(t):[0,T] \to \Rset^n$, $m=\lceil \alpha\rceil$ is the smallest integer greater or equal to $\alpha$ and $D_t^{\alpha}$ is the Caputo's fractional derivative
\[
	D^{\alpha}_{t} y(t) \equiv
	\frac{1}{\Gamma(m-\alpha)}
	  \int_{0}^t \frac{y^{(m)}(u)}{\bigl(t-u\bigr)^{\alpha+1-m}} \du u ,
\]
with $y^{(m)}$ denoting the standard integer--order derivative.

It is immediate to verify that the exact solution of the system (\ref{eq:FDE_Linear}) is 
\begin{equation}\label{eq:FDE_Linear_Solution}
	Y(t) = \sum_{\ell=0}^{m-1} t^{\ell} E_{\alpha,\ell+1}\bigl(t^{\alpha} A\bigr) Y^{\ell}_0 
\end{equation}
and, once some tool for the computation of the matrix ML function is available, the solution of (\ref{eq:FDE_Linear}) can be evaluated, possibly with high accuracy, directly at any time $t>0$. 
Conversely, the commonly used step-by-step methods usually involve considerable computational costs because of the persistent memory of fractional operators \cite{Diethelm2010}.

\subsection{Solution of time-fractional partial differential equations} 

Given a linear time-fractional partial differential equation
\begin{equation}\label{eq:tFPDE_Linear}
	D^{\alpha}_{t} u(t,x) = \nabla^2_x u(t,x) + f(t,x) , 
\end{equation}
subject to some initial and boundary conditions, a preliminary discretization along the spatial variables allows to recast (\ref{eq:tFPDE_Linear}) in terms of a semi-linear system of FDEs in the form 
\begin{equation}\label{eq:FDE_SemiLinear}
	D^{\alpha}_{t} U(t) = A U(t) + F(t) ,
\end{equation}
with $U(t) = \bigl(u_1(t),\dots,u_{N_x}(t)\bigr)^T$, being $u_i(t)$ an approximation of $u(x_i,t)$ on a partition $x_1,\dots,x_{N_{x}}$ of the spatial domain, and $F(t)$ obtained from the source term and the boundary conditions. By assuming for simplicity $0<\alpha<1$, the exact solution of the semi-discretized system (\ref{eq:FDE_SemiLinear}) can be formulated as
\begin{equation}\label{eq:FDE_SemiLinear_Solution}
	U(t) = E_{\alpha,1}\bigl(t^{\alpha} A\bigr) U_0 + \int_{{0}}^{t} (t-\tau)^{\alpha-1}E_{\alpha,\alpha}((t-\tau)^{\alpha}A) F(\tau) \, \du \tau.
\end{equation}

A common approach to solve (\ref{eq:FDE_SemiLinear}) relies on product-integration rules which actually approximate, in the standard integral formulation of (\ref{eq:FDE_SemiLinear}), the vector-field $AU(t)+F(t)$ by piecewise interpolating polynomials. This method, however, has severe limitations for convergence due to the non-smooth behavior of $U(t)$ at the origin \cite{Dixon1985}. However, the same kind of approximation when used in (\ref{eq:FDE_SemiLinear_Solution}), where just $F(t)$ is replaced by polynomials, does not suffer from the same limitations \cite{Garrappa2013_EPJST} and it is possible to obtain high order methods under the reasonable assumption of a sufficiently smooth source term $F(t)$. In addition, solving (\ref{eq:tFPDE_Linear}) by using matrix ML functions in (\ref{eq:FDE_SemiLinear_Solution}) allows also to overcome stability issues since the usual stiffness of the linear part of (\ref{eq:FDE_SemiLinear}) is solved in a virtually exact way.

\subsection{Solution of linear multiterm FDEs}


Let us consider a linear multiterm FDE of \emph{commensurate} order $0<\alpha<1$ 
\begin{equation}\label{eq:MultiTermFDE}
	\sum_{k=0}^{n} a_{k} \, D^{k \alpha}_{t} y(t) = f(t) ,
\end{equation}
with $D^{k \alpha}_{t}$ derivatives of Caputo type and associated initial conditions 
\[
	y(0) = b_0 , \, y'(0) = b_1, \, \dots , y^{(m-1)}(0)=b_{m-1} , \quad m = \left\lceil n \alpha \right\rceil .
\]

When $n \alpha<1$, or when $\alpha\in\Qset$, the FDE (\ref{eq:MultiTermFDE}) can be reformulated as a linear system of FDEs (see \cite{DiethelmFord2002_BIT,DiethelmFord2004_AMC} or \cite[Theorems 8.1-8.2]{Diethelm2010}). For instance, if $\alpha=p/q$, with $p,q\in\Nset$, 
 one puts $y_{1}(t) = y(t), \,y_{k}(t) = D^{1/q}_t y_{k-1}(t)$ and, by introducing the vector notation $Y(t) = \bigr(y_{1}(t),y_{2}(t),\dots,y_{N}(t)\bigr)^T$, it is possible to rewrite (\ref{eq:MultiTermFDE}) as
\begin{equation}\label{eq:MultiTermFDESystem}
	D^{1/q}_{t} Y(t) = A Y(t) + e_{N} f(t) , \quad Y(0) = Y_0
\end{equation}
where $e_{N} = \bigl(0,0,\dots,0,1\bigr)^T\in\Rset^{N}$, $Y_0\in\Rset^{N}$ is composed in a suitable way on the basis of the initial values $b_j$, the coefficient matrix $A\in \Rset^{N\times N}$ is a companion matrix and the size of problem (\ref{eq:MultiTermFDESystem}) is $N = np$.

Also in this case it is possible to express the exact solution by means of matrix ML functions as
\begin{equation}\label{eq:MultitermFDESolution}
	Y(t) = E_{\alpha,1}(t^{\alpha}A) Y_0 + \int_0^t (t-\tau)^{\alpha-1} E_{\alpha,\alpha}((t-\tau)^{\alpha}A) e_N f(\tau) \du \tau
\end{equation}
and hence approximate the above integral by some well-established technique. As a special case, with a polynomial source term $f(t)=c_0 + c_1 t + \dots c_s t^{s}$ it is possible to explicitly represent the exact solution of (\ref{eq:MultiTermFDESystem}) as
\begin{equation}\label{eq:MultitermFDESpectralSolution}
	Y(t) = E_{\alpha,1}(t^{\alpha}A) Y_0 + \sum_{\ell=0}^{s} \ell ! c_{\ell} t^{\alpha+\ell} E_{\alpha,\alpha+\ell+1}(t^{\alpha}A) e_N .
\end{equation}

We have also to mention that, as deeply investigated in \cite{LuchkoGorenflo1999}, the solution of the linear multiterm FDE (\ref{eq:MultiTermFDE}) can be expressed as the convolution of the given input function $f(t)$ with a special multivariate ML function of scalar type, an alternative formulation which can be also exploited for numerically solving (\ref{eq:MultiTermFDE}). 

It is beyond the scope of this work to compare the approach proposed in \cite{LuchkoGorenflo1999} with numerical approximations based on the matrix formulation (\ref{eq:MultitermFDESolution}); it is however worthwhile to highlight the possibility of recasting the same problem in different, but analytically equivalent, formulations: one involving the evaluation of scalar multivariate ML functions and the other based on the computation of standard ML functions with matrix arguments; clearly the two approaches present different computational features whose advantages and disadvantages could be better investigated in the future.

\subsection{Controllability and observability of fractional linear systems}

For a continuous-time control system the complete controllability describes the possibility of finding some input signal $u(t)$ such that the system can be driven, in a finite time, to any final state $x_{T}$ starting from the fixed initial state $x_0$. The dual concept of observability instead describes the possibility of determining, at any time $t$, the state $x_0=x(t_0)$ of the system from the subsequent history of the input signal $u(t)$ and of the output $y(t)$.

Given a linear time-invariant system of fractional order $\alpha>0$
\begin{equation}\label{eq:LinearFracSystem}
	\left\{\begin{array}{l}
		D^{\alpha}_{t} x(t) = A x(t) + B u(t) \\
		y(t) = C x(t) + D u(t) \\
	\end{array} \right. 
\end{equation}
controllability and observability are related to the corresponding Gramian matrices, defined respectively as \cite{BalachandranGovindarajOrtigueiraRiveroTrujillo2013,MatignonAndreaNovel1996} 
\[
	{\cal C}_{\alpha}(t)  := \int_0^t E_{\alpha,\alpha}((t-\tau)^{\alpha}A) B B^T E_{\alpha,\alpha}((t-\tau)^{\alpha}A^T) \du \tau 
\]
and
\[
	{\cal O}_{\alpha}(t) = \int_0^t E_{\alpha,1}(\tau^{\alpha}A^T) C^T C E_{\alpha,1}((t-\tau)^{\alpha}A) \du \tau 
\]
(see also \cite{MatychynOnyshchenko2015} for problems related to the control of fractional order systems).

In particular, system (\ref{eq:LinearFracSystem}) is controllable (resp. observable) on $[0 , T]$ if the matrix ${\cal C}_{\alpha}(t)$ (resp. the matrix ${\cal O}_{\alpha}(t)$) is positive definite.

Although alternative characterizations of controllability and observability are available in terms of properties of the state matrix $A$, the computation of the controllability Gramian ${\cal C}_{\alpha}(t)$ (resp. the observability Gramian ${\cal O}_{\alpha}(t)$) can be used to find an appropriate control to reach a state $x_{T}$ starting from the initial state $x_0$ (resp. to reconstruct the state $x(t)$ from the input $u(t)$ and the output $y(t)$). Moreover, computing the Gramians is also useful for input-output transfer function models derived from linearization of nonlinear state-space models around equilibrium points that depend on working conditions of the real modeled systems (see, for instance, \cite{LinoMaione2013b,LinoMaione2013a}).

\section{The Mittag-Leffler functions with matrix arguments: definitions}\label{S:MatrixFunctionSchurParlett}

\subsection{Theoretical background}

Given a function $f$ of scalar arguments and a matrix $A$, the problem of finding a suitable definition for $f(A)$ goes back to Cayley (1858) and it has been broadly analyzed since then. The extension from scalar to matrix arguments is straightforward for simple functions, like polynomials, since it trivially consists in substituting the scalar argument with the matrix $A$. This is also the case of some transcendental functions, like the ML, for which (\ref{eq:ML}) becomes
\begin{equation}\label{eq:ML_series_mat}
	E_{\alpha,\beta}(A) = \sum_{j=0}^{\infty} \frac{A^{j}}{\Gamma(\alpha j + \beta)}
\end{equation}
(this series representation is useful for defining $E_{\alpha,\beta}(A)$ but usually it cannot be used for computation since it presents the same issues, which will be discussed later in Subsection \ref{sec:series}, of the use of (\ref{eq:ML}) for the scalar function, amplified by the computation increase due to the matrix argument). 


In more general situations a preliminary definition is of fundamental importance to understand the meaning of evaluating a function on a matrix.

\begin{defn}
Let $A$ be a $n\times n$ matrix with $s$ distinct eigenvalues $\lambda_1,\ldots,\lambda_s$ and let $n_i$ be the {\emph{index}} of $\lambda_i$, that is, the smallest integer $k$ such that $(A-\lambda_i I)^k=0$ with $I$ denoting the $n\times n$ identity matrix. Then the function $f$ is said to be defined on the spectrum of $A$ if the values $f^{(j)}(\lambda_i), \,\, j=0,\ldots,{n_i}-1,\,\,i=1,\ldots,s$ exist.
\end{defn}

Then for functions defined on the spectrum of $A$ the {\emph{Jordan canonical form}} can be of use to define matrix functions. 
\begin{defn}
Let $f$ be defined on the spectrum of $A$ and let $A$ have the Jordan canonical form
\[
A= Z J Z^{-1}= Z \operatorname{diag}(J_1,\ldots,J_p) Z^{-1} ,
\quad 
J_k=\left[
\begin{array}{cccc}
\lambda_k  & 1 & &  \\
 & \lambda_k& \ddots & \\
&      &        \ddots    & 1 \\
&      &    & \lambda_k
\end{array}
\right]
\in\Cset^{m_k \times m_k}.
\]

Then
\[
f(A)=Zf(J)Z^{-1}=Z\operatorname{diag}(f(J_1),\dots,f(J_p))Z^{-1} ,
\]
with
\[
f(J_k)=\left[
\begin{array}{cccc}
f(\lambda_k)  & f^{\prime}(\lambda_k) & \ldots & \frac{f^{(m_k-1)}(\lambda_k)}{(m_k-1)!} \\
 & f(\lambda_k)& \ddots & \vdots \\
&      &        \ddots    & f^{\prime}(\lambda_k) \\
&      &    & f(\lambda_k)
\end{array}
\right].
\]
\end{defn}

This definition highlights a fundamental issue related to matrix functions: when multiple eigenvalues are present, the function derivatives need to be computed. In our context this is a serious aspect to consider since no numerical method has been considered till now for the derivatives of the (scalar) ML function, except for the first order case \cite{GorenfloLoutchkoLuchko2002}. 

In practice, however, the Jordan canonical form is rarely used since the similarity transformation may be very ill conditioned and, even more seriously, it cannot be reliably computed in floating point arithmetic.

\subsection{Schur-Parlett algorithm}

Diagonalizable matrices are favorable arguments for simple calculations; indeed a matrix is diagonalizable if there exist a nonsingular matrix $P$ and a diagonal matrix $D=\text{diag}(\lambda_1,\ldots,\lambda_n)$ such that $A=P^{-1}DP$. In this case $f(A)=P^{-1}f(D)P$ and $f(D)$ is still a diagonal matrix with principal entries $f(\lambda_{i})$. Clearly the computation involves only scalar arguments. Problems arise when the matrix $P$ is nearly singular or in general badly conditioned, since the errors in the computation are related to the condition number of the similarity matrix $P$ (see \cite{Higham2008}). For these reasons well conditioned transformations are preferred in this context.

The Schur decomposition is a standard instrument in linear algebra and we refer to \cite{GolubVanLoan1996} for a complete description and for a comprehensive treatment of the related issues. It nowadays  represents the starting point for general purpose algorithms to compute matrix functions thanks also to its backward stability. Once the Schur decomposition for the matrix argument $A$ is computed, say $A=QTQ^*$ with $Q$ unitary and $T$ upper triangular, then $f(A)=Q f(T)Q^*$, for any well defined $f$. The focus thus moves to the computation of the matrix function for triangular arguments. 

In general this is a delicate issue which can be the cause of severe errors if not cleverly accomplished. Here we describe the enhanced algorithm due to Davies and Higham \cite{DaviesHigham2003}: once the triangular matrix $T$ is computed, it is reordered and blocked 
so to get a matrix $\tilde{T}$  such that each block ${\tilde{T}}_{ij}$ has clustered eigenvalues and distinct diagonal blocks have far enough eigenvalues. 

To evaluate  $f({\tilde{T}}_{ii})$ a Taylor series is considered about the mean $\sigma$ of the eigenvalues of ${\tilde{T}}_{ii}$. So, if $m$ is the dimension of ${\tilde{T}}_{ii}$ then $\sigma=\textrm{trace}({\tilde{T}}_{ii})/m$ and 
\[
f({\tilde{T}}_{ii})=\displaystyle\sum_{k=0}^{\infty} \frac{f^{(k)}(\sigma)}{k!} M^k
\]
with $M$ such that ${\tilde{T}}_{ii}=\sigma I +M$. The powers of $M$ will decay quickly after the $(m-1)$-st since, by construction, the eigenvalues of ${\tilde{T}}_{ii}$ are ``sufficiently'' close.

Once the diagonal blocks  $f({\tilde{T}}_{ii})$ are evaluated, the rest of $f(\tilde{T})$ is computed by means of the block form of the Parlett recurrence \cite{Higham2008}. Finally the inverse similarity transformations and the reordering are applied to get $f(A)$.

\section{Computation of derivatives of the ML function}\label{S:DerivativeML}

The Schur-Parlett algorithm substantially relies on the knowledge of the derivatives of the scalar function in order to compute the corresponding matrix function. Since in the case of the ML function $E_{\alpha,\beta}(z)$ the derivatives are not immediately at disposal, it is necessary to devise efficient and reliable numerical methods for their computation for arguments located in any region of the complex plane and up to any possible order, in dependence of the spectrum of $A$.

To this purpose we present and discuss three different approaches, namely 
\begin{enumerate}
	\item series expansion,
	\item numerical inversion of the Laplace transform,
	\item summation formulas.
\end{enumerate}

Although the three approaches are based on formulas which are equivalent from an analytical point of view, their behavior when employed for numerical computation differs in a substantial way. 

We  therefore investigate in details weaknesses and strengths in order to identify the range of arguments for which each of them provides more accurate results and we tune, at the end of this Section, an efficient algorithm implementing, in a combined way, the different methods.

\subsection{Series expansion}\label{sec:series}

After a term-by-term derivation, it is immediate to construct from (\ref{eq:ML}) the series representation of the derivatives of the ML function 
\begin{equation}\label{eq:ML_Deriv}
	\frac{d^k}{dz^k} E_{\alpha,\beta}(z) 
	= \sum_{j=k}^{\infty} \frac{(j)_k }{\Gamma(\alpha j + \beta)} z^{j-k}
	, \quad
	k \in \Nset ,
\end{equation}
with $(x)_k$ denoting the falling factorial
\[
(x)_k=x(x-1)\cdots (x-k+1) .
\]

At a glance, the truncation of (\ref{eq:ML_Deriv}) to a finite number $J \in \Nset$ of terms, namely
\begin{equation}\label{eq:ML_DerivTruncated}
	\frac{d^k}{dz^k} E_{\alpha,\beta}(z) 
	\approx \sum_{j=k}^{J} \frac{(j)_k }{\Gamma(\alpha j + \beta)} z^{j-k} ,
\end{equation}
may appear as the most straightforward method to compute derivatives of the ML function. Anyway the computation of (\ref{eq:ML_DerivTruncated}) presents some not negligible issues entailing serious consequences if not properly addressed.


We have first to mention that in IEEE-754 double precision arithmetic the Gamma function can be evaluated only for arguments not greater than 171.624, otherwise overflow occurs; depending on $\alpha$ and $\beta$ the upper bound for the number of terms in (\ref{eq:ML_DerivTruncated}) is hence 
\[
 J_{\max} := \left\lfloor \frac{171.624-\beta}{\alpha} \right\rfloor ,
\]
with $\left\lfloor x \right\rfloor$ being the largest integer less than or equal to $x$; as a consequence the truncated series (\ref{eq:ML_DerivTruncated}) can be used only when the $J_{\max}$-th term is small enough to disregard the remaining terms in the original series (\ref{eq:ML_Deriv}). By assuming a target accuracy $\tau$ as the threshold for truncating the summation, a first bound on the range of admissible values of $z$ is then obtained by requiring that
\[
	|z| \le \left( \tau \frac{ \Gamma(\alpha J_{\max} + \beta)}{(J_{\max})_k}\right)^{\frac{1}{J_{\max}-k}} .
\]

The effects of round-off errors (related to the finite-precision arithmetic used for the computation) may however further reduce, in a noticeable way, the range of admissible arguments $z$ for which the series expansion can be employed. When $|z|>1$ and $|\arg(z)| > \alpha \pi /2$, some of the terms in (\ref{eq:ML_DerivTruncated}) are indeed in modulus much more larger than the modulus of $E_{\alpha,\beta}(z)$; actually, small values of $E_{\alpha,\beta}(z)$ result as the summation of large terms with alternating signs, a well-known source of heavy numerical cancellation (e.g., see \cite{Higham2002}).

To decide whether or not to accept the results of the computation, it is necessary to devise a reliable estimate of the round-off error. We inherently assume that in finite-precision arithmetic the sum $S_1=c_0+c_1$ of any two terms $c_0$ and $c_1$ leads to $\hat{S}_1=(c_0+c_1)(1+\delta_1)$, with $|\delta_1| < \epsilon$ and $\epsilon>0$ the machine precision. Then, for $J\geq 2$, the general summation $S_{J}=c_0+c_{1}+\ldots +c_{J}$ of $J+1$ terms is actually computed as 
\begin{equation}\label{eq:SumFinitePrecision}
	\hat{S}_J = S_J + (c_0 + c_1) \sum_{j=1}^{J} \delta_j + c_2 \sum_{j=2}^{J} \delta_j + c_{3} \sum_{j=3}^{J} \delta_j + \dots + c_{J} \delta_J 
\end{equation}
where $|\delta_1|,|\delta_2|,\dots,|\delta_J|<\epsilon$ and terms proportional to ${\cal{O}}({\epsilon}^2)$ have been discarded. It is thus immediate to derive the following bound for the round-off error
\begin{equation}\label{eq:BoundSumFinitePrecision1}
	\bigl| S_J - \hat{S}_J \bigr| \le \epsilon \left( J |c_{0}| + \sum_{j=1}^{J} (J-j+1) \bigl| c_j \bigr| \right) .
\end{equation}

The order by which the terms $c_j$ are summed is relevant especially for the reliability of the above estimator; clearly, an ascending sorting of $|c_j|$ makes the estimate (\ref{eq:BoundSumFinitePrecision1}) more conservative and hence more useful for practical use. It is therefore advisable, especially in the more compelling cases (namely, for arguments $z$ with large modulus and $|\arg(z)| > \alpha \pi /2$) to perform a preliminary sorting of the terms in (\ref{eq:ML_DerivTruncated}) with respect to their modulus.

An alternative estimate of the round-off error can be obtained after reformulating (\ref{eq:SumFinitePrecision}) as
\[
	\hat{S}_J = S_J + \sum_{j=1}^{J} S_j \delta_j 
\]
from which one can easily derive
\begin{equation}\label{eq:BoundSumFinitePrecision3}
	\bigl| S_J - \hat{S}_J \bigr| \le \epsilon \sum_{j=1}^{J} \bigl| S_j \bigr| .
\end{equation}

The bound (\ref{eq:BoundSumFinitePrecision3}) is surely more sharp than (\ref{eq:BoundSumFinitePrecision1}) and it does not involve a large amount of extra computation since it is possible to evaluate $S_J$ by storing all the partial sums $S_j$; anyway (\ref{eq:BoundSumFinitePrecision3}) is based on the exact values of the partial sums $S_j$ which are actually not available; since just the computed values $\hat{S}_j$ can be employed, formula (\ref{eq:BoundSumFinitePrecision3}) can underestimate the round-off error. We find useful to use a mean value between the bounds provided by (\ref{eq:BoundSumFinitePrecision1}) and (\ref{eq:BoundSumFinitePrecision3}).

Unfortunately, only ``a posteriori'' estimates of the round-off error are feasible and it seems not possible to prevent from performing the computation before establishing whether or not to accept the results. Obviously, on the basis of the above discussion, it is possible to reduce the range of arguments for which the evaluation by (\ref{eq:ML_DerivTruncated}) is attempted and, anyway, if well organized, the corresponding algorithm runs in reasonable time.

\subsection{Numerical inversion of the Laplace transform}\label{SS:NumericalLT}

After a change of the summation index, it is straightforward from (\ref{eq:ML_Deriv}) to derive the following representation of the derivatives of the ML function 
\begin{equation}\label{eq:MLder_ML3}
	\frac{d^k}{dz^k} E_{\alpha,\beta}(z) 
	= \sum_{j=0}^{\infty} \frac{(j+k)_k z^{j}}{\Gamma(\alpha j + \alpha k + \beta)} 
	= k! E_{\alpha,\alpha k + \beta}^{k+1}(z) ,
\end{equation}
where 
\begin{equation}\label{eq:ML3}
	E_{\alpha,\beta}^{\gamma}(z) = \frac{1}{\Gamma(\gamma)} \sum_{j=0}^{\infty} \frac{\Gamma(j+\gamma) z^{j}}{j!\Gamma(\alpha j + \beta)} 
\end{equation}
is a three parameter ML function which is recognized in literature as the Prabhakar function \cite{Prabhakar1971} (the relationship between the derivatives of the ML function and the Prabhakar function has been recently highlighted also in \cite{GarraGarrappa2018_CNSNS,OliveiraOliveiraDeif2016}). 

This function, which is clearly a generalization of the ML function (\ref{eq:ML}), since $E_{\alpha,\beta}^{1}(z)=E_{\alpha,\beta}(z)$, is recently attracting an increasing attention in view of its applications in the description of relaxation properties in a wide range of anomalous phenomena (see, for instance \cite{ColombaroGiustiVitali2018,GarrappaMainardiMaione2016,GiustiColombaro2018,LiemertSandevKantz2017,Sandev2017}). For this reason some authors recently investigated the problem of its numerical computation \cite{Garrappa2015_SIAM,StanislavskyWeron2012}.

The approach proposed in \cite{Garrappa2015_SIAM} is based on the numerical inversion of the Laplace transform and extends a method previously proposed in \cite{GarrappaPopolizio2013,WeidemanTrefethen2007}. Since it allows to perform the computation with an accuracy very close to the machine precision, the extension to derivatives of the ML function appears of particular interest.

For any $t>0$ and $z \in \Cset$, the Laplace transform (LT) of the function $t^{\beta-1} E_{\alpha,\beta}^{\gamma}(t^{\alpha} z)$ is given by 
\begin{equation}\label{eq:ML3_LT}
	{\cal L} \left( t^{\beta-1} E_{\alpha,\beta}^{\gamma}(t^{\alpha} z) \, ; \, s \right)
	= \frac{s^{\alpha \gamma-\beta}}{(s^{\alpha} - z)^{\gamma}}
	, \quad \Re(s)>0, \,\, |z s^{-\alpha}| < 1.
\end{equation}
After selecting $t=1$ and denoting
\[
	H_{k}(s;z) = \frac{s^{\alpha - \beta}}{(s^{\alpha} - z)^{k+1}} ,
\]
with a branch--cut imposed on the negative real semi axis to make $H_{k}(s;z)$ single-valued, the derivatives of the ML function can be evaluated by inverting the LT (\ref{eq:ML3_LT}), namely by recasting (\ref{eq:MLder_ML3}) in form of the integral
\begin{equation}\label{eq:ML3_der_Integral}
	\frac{d^k}{dz^k} E_{\alpha,\beta}(z)  = \frac{k!}{2\pi i} \int_{{\cal C}} e^{s} H_{k}(s;z) \, \du s =\mathrel{\mathop:} I_k(z) ,
\end{equation}
over a contour ${\cal C}$ in the complex plane encompassing at the left all the singularities of $H_k(s;z)$.

The  trapezoidal rule has been proved to possess excellent properties for the numerical quadrature of contour integrals of analytic functions \cite{BornemannLauriWagoWaldvogel2004,TrefethenWeideman2014} but satisfactory enough results can be obtained also in the presence of singularities if the contour is selected in a suitable way.

In particular, in order to truncate, after a reasonable small number of terms, the infinite sum resulting from the application of the trapezoidal rule to (\ref{eq:ML3_der_Integral})  it is necessary that the integrand decays in a fast way; therefore contours beginning and ending in the left half of the complex plane must be selected. Our preference is for parabolic contours 
\[
	{\cal C} : \sigma(u) = \mu (i u +1)^2 , \quad -\infty < u < \infty
\]
whose simplicity allows accurate estimates of the errors and a fine tuning of the main parameters involved in the integration.

Given a grid $u_{j}=jh$, $j=-N,\dots,N$, with constant step-size $h>0$, the truncated trapezoidal rule applied to (\ref{eq:ML3_der_Integral}) reads as
\[
	I_k^{[N]}(z) = \frac{k! h}{2\pi i} \sum_{j=-N}^{N} e^{\sigma(u_{j})} H_{k}(\sigma(u_{j});z) \sigma'(u_{j}) .
\]

For choosing the integration parameters $\mu$, $h$ and $N$ the two main components of the error are considered, that is the discretization and the truncation errors. The balancing of these errors, according to the procedure described first in \cite{WeidemanTrefethen2007}, and hence applied to the ML function in \cite{Garrappa2015_SIAM,GarrappaPopolizio2013}, allows to select optimal parameters in order to achieve a prescribed tolerance $\tau > 0$.

The error analysis is performed on the basis of the distance between the contour and the integrand singularities. As observed in \cite{Garrappa2015_SIAM}, in addition to the branch-point singularity at $s=0$, the set of the non zero poles $s^{\star}$ of $H_{k}(s;z)$ in the main Riemann sheet, i.e. such that $-\pi < \Arg(s^{\star}) \le \pi$, is given by
\begin{equation}\label{eq:SetPoles}
	S^{\star} = \left\{ | s |^{1/\alpha} e^{i\frac{\Arg(s) + 2j\pi}{\alpha}} \, \Bigr| \,
	  -\frac{\alpha}{2} - \frac{\Arg(s)}{2\pi} < j \le \frac{\alpha}{2} - \frac{\Arg(s)}{2\pi} , \, j \in \Zset \right\} .
\end{equation}

Depending on $\alpha$ and $z$, one can therefore expect singularities in any number and in any region of the complex plane but the selection of contours so wide to encompass all the singularities usually leads to numerical instability. In order to avoid that round-off errors dominate discretization and truncation errors, thus preventing from achieving the target accuracy, it is indeed necessary to select contours which do not extend too far in the right half-part of the complex plane; in particular, it has been found \cite{Garrappa2015_SIAM} that the main parameter $\mu$ of the parabolic contour must satisfy 
\begin{equation}\label{eq:VincoloMu}
	\mu < \log\tau - \log\epsilon,
\end{equation}
with $\epsilon$ the machine precision and $\tau> \epsilon$ the target accuracy.

Since in many cases it is not possible to leave all the singularities at the left of the contour and simultaneously satisfy (\ref{eq:VincoloMu}), some of the singularities must be removed from (\ref{eq:ML3_der_Integral}) to gain more freedom in the selection of the contour ${\cal C}$; the corresponding residues are therefore subtracted in view of the Cauchy's residue theorem and hence 
\begin{equation} \label{eq:GeneralizedMLInverseLaplace}
	\frac{d^k}{dz^k} E_{\alpha,\beta}(z) = 
		k! \sum_{s^{\star} \in S^{\star}_{\cal C}} \operatorname{Res} \bigl( e^{s} H_k(s;z) , s^{\star} \bigr) +
		\frac{k!}{2\pi i} \int_{{\cal C}} e^{s} H_k(s;z)  \du s,
\end{equation}
where $S^{\star}_{\cal C} \subseteq S^{\star} $ is the set of the singularities of $H_k(s;z)$ laying to the right of ${\cal C}$ (which now is not constrained to encompass all the singularities) and $\operatorname{Res} \bigl( e^{s} H_k(s;z) , s^{\star} \bigr)$ is the residue of $e^{s} H_k(s;z) $ at $s^{\star}$; since the selected branch--cut, the singularity at the origin is always kept at the left of ${\cal C}$.

To make equation (\ref{eq:GeneralizedMLInverseLaplace}) applicable for computation we give an explicit representation of the residues in terms of elementary functions by means of the following result.

\begin{prop}\label{prop:ResiduesH}
Let $\alpha>0$, $\beta \in \Rset$, $k \in \Nset$, $z \not=0$ and $s_{\star}\in S_{\star}$ one of the poles of $H_k(s;z)$. Then
\[
	\operatorname{Res} \bigl( e^{s}H_{k}(s;z),s_{\star} \bigr) 
	= \frac{1}{\alpha^{k+1}} e^{s_{\star}} s_{\star}^{1- \alpha k - \beta} P_{k}( s_{\star})
\]
where $P_{k}(x)=p_0^{(k)} + p_1^{(k)} x + \dots p_{k}^{(k)} x^{k}$ is the $k$-th degree polynomial whose coefficients are 
\[
	p_{j}^{(k)} = 
		\frac{1}{j!} 
		\sum_{\ell=0}^{k-j} 
		\frac{ (\alpha - \beta)_{\ell}  }{\ell! }  H^{(k)}_{k-j-\ell}
	, \quad j = 0, 1, \dots , k 
\]
with the coefficients $H^{(k)}_{j}$ evaluated recursively as
\[
	H^{(k)}_{0} = 1 
	, \quad
	H^{(k)}_{j} = - \frac{1}{\alpha} \sum_{\ell=1}^{j} \binom{\alpha}{\ell+1} \left(\frac{k\ell}{j} + 1 \right) H^{(k)}_{j-\ell}
	, \quad j=1,2,\dots,k 
\]
and the generalized binomial coefficients are defined as
\[
	\binom{\alpha}{j} = \frac{\alpha(\alpha-1)(\alpha-2)\cdots(\alpha-j+1)}{j!} 
	= (-1)^{j} \frac{\Gamma(j-\alpha)}{\Gamma(-\alpha)\Gamma(j+1)}.
\]
\end{prop}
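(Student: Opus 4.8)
The plan is to treat $s_{\star}$ as a pole of order $k+1$ of the integrand $e^{s}H_{k}(s;z)$ and to extract the residue by a local power-series expansion. Since $s_{\star}\in S_{\star}$ satisfies $s_{\star}^{\alpha}=z$, the denominator $(s^{\alpha}-z)^{k+1}$ vanishes to order exactly $k+1$ at $s_{\star}$ (its first derivative $\alpha s_{\star}^{\alpha-1}$ is nonzero because $z\neq 0$ forces $s_{\star}\neq 0$), so I would write $s^{\alpha}-z=s^{\alpha}-s_{\star}^{\alpha}=(s-s_{\star})\,g(s)$ with $g(s):=(s^{\alpha}-s_{\star}^{\alpha})/(s-s_{\star})$ analytic and nonvanishing near $s_{\star}$. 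Then
\[
h(s):=(s-s_{\star})^{k+1}e^{s}H_{k}(s;z)=\frac{e^{s}\,s^{\alpha-\beta}}{g(s)^{k+1}}
\]
is analytic at $s_{\star}$, and $\operatorname{Res}(e^{s}H_{k}(s;z),s_{\star})$ equals the coefficient of $(s-s_{\star})^{k}$ in the Taylor expansion of $h$ about $s_{\star}$.

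Next I would pass to the scaled local variable $v=(s-s_{\star})/s_{\star}$, so that the sought coefficient of $(s-s_{\star})^{k}$ equals $s_{\star}^{-k}$ times the coefficient of $v^{k}$. Using $(1+v)^{\alpha}-1=\sum_{m\ge 1}\binom{\alpha}{m}v^{m}$ gives $g(s)=\alpha s_{\star}^{\alpha-1}G(v)$ with $G(v)=\sum_{m\ge 0}\frac{1}{\alpha}\binom{\alpha}{m+1}v^{m}$ and $G(0)=1$. Substituting $e^{s}=e^{s_{\star}}e^{s_{\star}v}$ and $s^{\alpha-\beta}=s_{\star}^{\alpha-\beta}(1+v)^{\alpha-\beta}$ and collecting all powers of $s_{\star}$ — namely $s_{\star}^{\alpha-\beta}$, the factor $s_{\star}^{-(\alpha-1)(k+1)}$ from $g(s)^{-(k+1)}$, and the $s_{\star}^{-k}$ above — the exponent reduces to $1-\alpha k-\beta$ and the prefactor to $e^{s_{\star}}/\alpha^{k+1}$, so that $P_{k}(s_{\star})$ is exactly the coefficient of $v^{k}$ in $e^{s_{\star}v}(1+v)^{\alpha-\beta}G(v)^{-(k+1)}$. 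Expanding $e^{s_{\star}v}=\sum_{j}\frac{s_{\star}^{j}}{j!}v^{j}$ produces the degree-$k$ polynomial in $s_{\star}$, while writing $(1+v)^{\alpha-\beta}=\sum_{\ell}\frac{(\alpha-\beta)_{\ell}}{\ell!}v^{\ell}$ and $G(v)^{-(k+1)}=\sum_{j}H_{j}^{(k)}v^{j}$ and convolving the three series yields $p_{j}^{(k)}=\frac{1}{j!}\sum_{\ell=0}^{k-j}\frac{(\alpha-\beta)_{\ell}}{\ell!}H_{k-j-\ell}^{(k)}$, as claimed, once $H_{j}^{(k)}$ is identified with the coefficient of $v^{j}$ in $G(v)^{-(k+1)}$.

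The crux is therefore the recurrence for the coefficients of the negative power $W(v):=G(v)^{-(k+1)}$, which I would obtain by the logarithmic-derivative (J.C.P.\ Miller) technique. Differentiating $W=G^{-(k+1)}$ gives the first-order identity $G\,W'=-(k+1)\,G'\,W$; comparing the coefficient of $v^{n-1}$ on both sides, writing $g_{m}=\frac{1}{\alpha}\binom{\alpha}{m+1}$ and isolating the $m=0$ term (where $g_{0}=G(0)=1$) leads to
\[
n\,H_{n}^{(k)}+\sum_{m=1}^{n}g_{m}\bigl(n+mk\bigr)H_{n-m}^{(k)}=0 ,
\]
which is precisely the stated recurrence after dividing by $n$ and factoring out $1/\alpha$. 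I expect this coefficient extraction to be the main obstacle, since it requires carefully tracking the index shift between $W$ and $W'$ and the sign factor $1+p=-k$ coming from the exponent $p=-(k+1)$; the initialization $H_{0}^{(k)}=1$ follows immediately from $G(0)=1$, and the remaining steps are routine power-series bookkeeping.
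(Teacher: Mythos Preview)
Your proposal is correct and follows essentially the same route as the paper: both arguments localize at the order-$(k+1)$ pole $s_{\star}$, expand $s^{\alpha}-z$ via the binomial series in $v=(s-s_{\star})/s_{\star}$, invoke Miller's formula for the coefficients $H_{j}^{(k)}$ of $G(v)^{-(k+1)}$, and then combine the three factors $e^{s}$, $s^{\alpha-\beta}$, and $G^{-(k+1)}$. The only cosmetic difference is that the paper uses the derivative formula for the residue together with the trinomial Leibniz rule for $F_{1}F_{2}F_{3}$, whereas you read off the coefficient of $v^{k}$ directly by convolving the three power series; these are the same computation, and your logarithmic-derivative derivation of the $H_{j}^{(k)}$ recurrence is exactly the standard proof of the Miller formula that the paper cites.
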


\begin{proof}
Since $s_{\star}$ is a pole of order $k+1$ of $e^{s}H_{k}(s;z)$, the corresponding residue can be evaluated by differentiation
\begin{equation}\label{eq:ResidueDifferentiation}
	\operatorname{Res} \bigl( e^{s}H_{k}(s;z),s_{\star} \bigr) = \frac{1}{k!} \lim_{s \to s_{\star}} \frac{d^{k}}{ds^{k}}  e^{s}H_{k}(s;z) (s-s_{\star})^{k+1} .
\end{equation}

To study the term $H_{k}(s;z) (s-s_{\star})^{k+1}$ we observe that for $s$ sufficiently close to $s_{\star}$ we can write 
\[
	s^{\alpha} 
	= s_{\star}^{\alpha} \left( \frac{s-s_{\star}}{s_{\star}} + 1 \right)^{\alpha}
	= s_{\star}^{\alpha} + s_{\star}^{\alpha} \sum_{j=1}^{\infty} \binom{\alpha}{j} \frac{(s-s_{\star})^{j}}{s_{\star}^j} ,
\]
and, hence, since $s_{\star}^{\alpha}=z$ we have
\[
	s^{\alpha} - z 
	= s_{\star}^{\alpha} \sum_{j=1}^{\infty} \binom{\alpha}{j} \frac{(s-s_{\star})^{j}}{s_{\star}^j} .
\]

After observing that $\binom{\alpha}{1}=\alpha$, it is a direct computation to provide the expansion
\begin{eqnarray*}
	H_{k}(s;z) (s-s_{\star})^{k+1} 
	&=& s^{\alpha-\beta} \left( \frac{s^{\alpha} - z}{s-s_{\star}} \right)^{-(k+1)} \\
	&=& \frac{s_{\star}^{-(\alpha-1)(k+1)} }{\alpha^{k+1}} s^{\alpha-\beta}  
		\left( 1 + \frac{1}{\alpha} \sum_{j=1}^{\infty} \binom{\alpha}{j+1} \frac{  (s-s_{\star})^{j}}{  s_{\star}^j} \right)^{-(k+1)} .\
\end{eqnarray*}

The reciprocal of the $(k+1)$-th power of the unitary formal power series can be evaluated by applying the Miller's formula \cite[Theorem 1.6c]{Henrici1974} thanks to which it is possible to provide the expansion 
\[
	H_{k}(s;z) (s-s_{\star})^{k+1}
	= \frac{s_{\star}^{-(\alpha-1)(k+1)} }{\alpha^{k+1}} s^{\alpha-\beta} \sum_{j=0}^{\infty} H^{(k)}_{j} \frac{(s-s_{\star})^{j}}{s_{\star}^j} .
\]

To evaluate (\ref{eq:ResidueDifferentiation}) we now introduce, for convenience, the functions
\[
	F_{1}(s) = e^{s}
	, \quad
	F_{2}(s) = s^{\alpha-\beta}
	, \quad
	F_{3}(s) = \sum_{j=0}^{\infty} H^{(k)}_{j} \frac{(s-s_{\star})^{j}}{s_{\star}^j} ,
\]
and hence, thanks to (\ref{eq:ResidueDifferentiation}) we are able to write
\[
	\frac{d^{k}}{ds^{k}}  \left( e^{s}H_{k}(s;z) (s-s_{\star})^{k+1} \right) = 
	\frac{s_{\star}^{-(\alpha-1)(k+1)}}{\alpha^{k+1}}  \sum_{k_1=0}^{k} \sum_{k_2=0}^{k-k_{1}} \frac{k! F_{1}^{(k_1)}(s)  F_{2}^{(k_2)}(s)  F_{3}^{(k-k_1-k_2)}(s) }{k_1!  k_2!  (k-k_1-k_2)!} .
\]

It is immediate to evaluate the derivatives of the functions $F_1$, $F_2$ and $F_3$ and their limit as $s\to s_{\star}$
\[
	\lim_{s \to s_{\star}} F_{1}^{(j)}(s) = e^{s_{\star}}
	, \quad
	\lim_{s \to s_{\star}} F_{2}^{(j)}(s) = (\alpha-\beta)_{j} s_{\star}^{\alpha-\beta-j}
	, \quad
	\lim_{s \to s_{\star}} F_{3}^{(j)}(s) = \frac{j! \, H^{(k)}_{j}}{s_{\star}^{j}} ,
\]
and hence we are able to compute
\[
	\operatorname{Res} \bigl( e^{s}H_{k}(s;z),s_{\star} \bigr) 
	= \frac{s_{\star}^{-(\alpha-1)(k+1)}}{\alpha^{k+1}} 
		\sum_{k_1=0}^{k} \sum_{k_2=0}^{k-k_{1}} 
		\frac{ e^{s_{\star}} (\alpha-\beta)_{k_{2}} s_{\star}^{\alpha-\beta-k_{2}} H^{(k)}_{k-k_1-k_2} }{k_1!  k_2! s_{\star}^{k-k_1-k_2} } 
\]
from which we obtain
\[
	\operatorname{Res} \bigl( e^{s}H_{k}(s;z),s_{\star} \bigr) 
	= \frac{s_{\star}^{1 - \alpha k - \beta }}{\alpha^{k+1}} e^{s_{\star}}
		\sum_{k_1=0}^{k} \frac{s_{\star}^{k_{1}}}{k_1! } \sum_{k_2=0}^{k-k_{1}} \frac{ (\alpha-\beta)_{k_{2}}  H^{(k)}_{k-k_1-k_2} }{ k_2! } 
\]
and the proof immediately follows. \qed
\end{proof}

We observe that, with respect to the formula presented in \cite{TomovskiPoganiSrivastava2014}, the result of Proposition \ref{prop:ResiduesH} is slightly more general and easier to be used for computation. The coefficients of polynomials $P_{k}(x)$ can be evaluated without any particular difficulty by means of a simple algorithm. For ease of presentation we show here their first few instances 
\begin{eqnarray*}
	P_{0}(x) &=& 1 \\
	P_{1}(x) &=& (\alpha-\beta+1) + x \\
	P_{2}(x) &=& \frac{1}{2} + \left(\frac{3\alpha}{2} - \beta + \frac{3}{2}\right) x 
	+ \left( \alpha^2 - \frac{3\alpha\beta}{2} + \frac{3\alpha}{2} + \frac{\beta^2}{2} - \beta + \frac{1}{2} \right) x^2 \
\end{eqnarray*}

We have to consider that the algorithm devised in \cite{Garrappa2015_SIAM} for the inversion of the LT of the ML function needs some adjustment to properly evaluate the derivatives. As $k$ increases a loss of accuracy can indeed be expected for very small arguments also on the negative real semi axis (namely the branch-cut). This is a consequence of the fact that the origin actually behaves as a singularity, of order $k+1$, and hence its effects must be included in the balance of the error. 

Moreover, for high order derivatives, round-off errors must be expected especially when a singularity of $H_k(s;z)$ lies not so close to the origin (thus to make very difficult the choice of a contour encompassing all of them) but at the same time not so far from the origin (thus, making difficult to select contour in the region between the origin and the singularity). To safely deal with situations of this kind we will introduce, later in Subsection \ref{SS:CombinedAlgorithm}, a technique which keeps as low as possible the order of the derivatives to be evaluated by the numerical inversion of the Laplace transform.

\subsection{Summation formulas}\label{SS:Dzherbashyan}

The Djrbashian's formula \cite{Djrbashian1993} allows to express the first derivative of the ML function in terms of two instances of the same function, according to
\begin{equation}\label{eq:FirstDerivative}
		\frac{d}{dz} 	E_{\alpha,\beta}(z) = \frac{E_{\alpha,\beta-1}(z) + (1-\beta)E_{\alpha,\beta}(z)}{\alpha z} ,
\end{equation}
and, obviously, (\ref{eq:FirstDerivative}) holds only for $z\not=0$; anyway, from (\ref{eq:ML_Deriv}) it is immediate to verify that 
\begin{equation}\label{eq:ML_Deriv0}
	\left. \frac{d^k}{dz^k} E_{\alpha,\beta}(z) \right|_{z=0} = \frac{k!}{\Gamma(\alpha k + \beta)} .
\end{equation}


Equation (\ref{eq:FirstDerivative}) provides an exact formulation for the first-order derivative of the ML function but we are able to generalize it also to higher-order derivatives by means of the following result.
\begin{prop}[Summation formula of Djrbashian type]\label{prop:ML_Der_Recursive}
Let $\alpha>0, \beta \in \Rset$ and $z\not=0$. For any $k \in \Nset$ it is
\begin{equation}\label{eq:ML_Der_Recursive}
	\frac{d^{k}}{dz^{k}} E_{\alpha,\beta}(z) = \frac{1}{\alpha^k z^{k}} \sum_{j=0}^{k} c_{j}^{(k)} E_{\alpha,\beta-j}(z) ,
\end{equation}
where $c_{0}^{(0)}=1$ and the remaining coefficients $c_{j}^{(k)}$, $j=0,1,\dots,k$, are recursively evaluated as 
\begin{equation}\label{eq:Coeff_Dzherbashyan}
	c_{j}^{(k)} = 
	\left\{ \begin{array}{ll}
		(1 - \beta - \alpha (k-1)) c_{0}^{(k-1)} & j = 0 \\
		c_{j-1}^{(k-1)} + (1 - \beta - \alpha (k -1) + j) c_{j}^{(k-1)} \quad & j=1,\dots,k-1 \\
		1 & j = k \
	\end{array}\right.
\end{equation}	
\end{prop}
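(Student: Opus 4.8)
The plan is to argue by induction on $k$, using the first-order Djrbashian identity (\ref{eq:FirstDerivative}) as the engine that lowers the second parameter by one at each differentiation. The base case $k=0$ is the trivial identity $E_{\alpha,\beta}(z)=E_{\alpha,\beta}(z)$ together with $c_0^{(0)}=1$; equivalently one may take $k=1$ as the base, where a direct check of (\ref{eq:Coeff_Dzherbashyan}) gives $c_0^{(1)}=1-\beta$ and $c_1^{(1)}=1$, so that (\ref{eq:ML_Der_Recursive}) reduces to (\ref{eq:FirstDerivative}).

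For the inductive step I would assume (\ref{eq:ML_Der_Recursive}) holds for some $k$ and differentiate both sides once more. On the right-hand side the scalar factor $\alpha^{-k} z^{-k}$ is handled by the product rule: differentiating $z^{-k}$ yields a term proportional to $z^{-(k+1)}$ carrying the factor $-k$, while the remaining term differentiates each summand $E_{\alpha,\beta-j}(z)$. To the latter I would apply (\ref{eq:FirstDerivative}) with $\beta$ replaced by $\beta-j$, namely
\[
	\frac{d}{dz} E_{\alpha,\beta-j}(z) = \frac{1}{\alpha z}\bigl( E_{\alpha,\beta-j-1}(z) + (1-\beta+j) E_{\alpha,\beta-j}(z) \bigr).
\]
Pulling out the common factor $\alpha^{-(k+1)} z^{-(k+1)}$ — which forces the term arising from $\frac{d}{dz} z^{-k}$ to be multiplied by $\alpha$, so it contributes $-k\alpha$ — expresses the $(k+1)$-th derivative as a linear combination of the functions $E_{\alpha,\beta-m}(z)$ for $m=0,1,\dots,k+1$.

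The decisive step is then to collect the coefficient of each $E_{\alpha,\beta-m}(z)$ and match it against $c_m^{(k+1)}$ prescribed by (\ref{eq:Coeff_Dzherbashyan}) with $k$ replaced by $k+1$. Two sources feed the coefficient of $E_{\alpha,\beta-m}$: the shifted terms $E_{\alpha,\beta-j-1}$ with $j=m-1$ contribute $c_{m-1}^{(k)}$, and the diagonal terms with $j=m$ contribute $(-k\alpha + 1-\beta+m)\,c_m^{(k)} = (1-\beta-\alpha k + m)\,c_m^{(k)}$. Reading this off in the three regimes reproduces the recurrence exactly: for $m=0$ only the diagonal source is present, giving $(1-\beta-\alpha k)\,c_0^{(k)}$; for $1\le m\le k$ both sources appear, giving $c_{m-1}^{(k)} + (1-\beta-\alpha k + m)\,c_m^{(k)}$; and for $m=k+1$ only the shift source survives, contributing $c_k^{(k)}=1$ and hence $c_{k+1}^{(k+1)}=1$.

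I expect the only delicate point to be this index bookkeeping: correctly aligning the shift $j\mapsto j+1$ against the diagonal terms, and tracking the powers of $\alpha$ and $z$ so that the stray factor from $\frac{d}{dz} z^{-k}$ is absorbed into the $-\alpha k$ of the recurrence. The analytic content is entirely supplied by (\ref{eq:FirstDerivative}), so no further convergence or regularity considerations are needed beyond $z\neq 0$, which is exactly the hypothesis under which (\ref{eq:FirstDerivative}) is valid.
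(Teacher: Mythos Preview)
Your proposal is correct and follows essentially the same route as the paper: induction on $k$ with base cases $k=0,1$, then differentiating the inductive hypothesis via the product rule and applying the Djrbashian identity (\ref{eq:FirstDerivative}) to each $E_{\alpha,\beta-j}$, after which collecting like terms reproduces the recurrence (\ref{eq:Coeff_Dzherbashyan}). The only cosmetic difference is that the paper passes from $k-1$ to $k$ while you pass from $k$ to $k+1$; your index bookkeeping and the absorption of the $-k$ from $\frac{d}{dz}z^{-k}$ into the $-\alpha k$ of the recurrence are handled correctly.
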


\begin{proof}
We proceed by induction on $k$. For $k=0$ the proof is obvious and for $k=1$ it is a consequence of (\ref{eq:FirstDerivative}). Assume now that (\ref{eq:ML_Der_Recursive}) holds for $k-1$; then, the application of standard derivative rules, together with the application of (\ref{eq:FirstDerivative}), allows us to write
\begin{eqnarray*}
	\frac{d^{k}}{dz^{k}} E_{\alpha,\beta}(z) 
	&=&
	\frac{1}{\alpha^{k-1} z^{k-1}} \sum_{j=0}^{k-1} \frac{c_{j}^{(k-1)}}{\alpha z} \Bigl( E_{\alpha,\beta-j-1}(z) - (\beta-j-1)E_{\alpha,\beta-j}(z) \Bigr) \\
	& & - \frac{(k-1) z^{k-2}}{\alpha^{k-1} z^{2k-2}} \sum_{j=0}^{k-1} c_{j}^{(k-1)} E_{\alpha,\beta-j}(z)  \
\end{eqnarray*}
and the proof follows after defining the coefficients $c_{j}^{(k)}$ as in (\ref{eq:Coeff_Dzherbashyan}).  \qed
\end{proof}

The Djrbashian summation formula (SF) of Proposition \ref{prop:ML_Der_Recursive} allows to represent the derivatives of the ML function in terms of a linear combination of values of the same function; clearly it can be employed in actual computation once a reliable procedure for the evaluation of $E_{\alpha,\beta}(z)$ for any parameter $\alpha$ and $\beta$ is available, as it is the case of the method described in \cite{Garrappa2015_SIAM}.

Unfortunately, it seems not possible to provide an explicit closed form for the coefficients $c_{j}^{(k)}$ in the Djrbashian SF (\ref{eq:ML_Der_Recursive}); anyway the recursive relationship (\ref{eq:Coeff_Dzherbashyan}) is very simple to compute and the first few coefficients, holding up to the derivative of third order, are shown in Table \ref{tbl:Coeff_Dzherbashyan}.

\begin{table}
\footnotesize
\[
	\begin{array}{l|c|c|c|c|} \hline
			& k=0 & k=1 & k=2 & k=3 \\ \hline
		c_{0}^{(k)} & 1 & 1-\beta & (1-\beta)(1-\beta-\alpha) & (1-\beta)(1-\beta-\alpha)(1-\beta-2\alpha) \\
		c_{1}^{(k)} &   & 1       & 3-2\beta-\alpha & (1-\beta)(1-\beta-\alpha) + (3-2\beta-\alpha) (2-\beta-2\alpha) \\
		c_{2}^{(k)} &   &         &               1 & 6-3\beta-3\alpha \\
		c_{3}^{(k)} &   &         &                 & 1 \\ \hline	
	\end{array}
\]
\normalsize
\caption{First few coefficients (\ref{eq:Coeff_Dzherbashyan}) of the summation formulas (\ref{eq:ML_Der_Recursive}) and (\ref{eq:ML_Der_Recursive2}).}\label{tbl:Coeff_Dzherbashyan}
\end{table}

An alternative approach to compute the coefficients $c_{j}^{(k)}$ can be derived by observing, from (\ref{eq:ML_Deriv0}), that the derivatives of the ML function have finite values at $z=0$; hence, since $E_{\alpha,\beta}$ is entire, all possible negative powers of $z$ in (\ref{eq:ML_Der_Recursive}) must necessarily vanish and, since $c_{k}^{(k)}=1$, it is immediate to verify that the coefficients $c_{j}^{(k)}$ are solution of the linear system

\[
	\sum_{j=0}^{k-1} \frac{1}{\Gamma(\alpha \ell + \beta-j)}c_{j}^{(k)} = -\frac{1}{\Gamma(\alpha \ell + \beta-k)}
	, \quad
	\ell = 0, \dots k-1 .
\]

To study the stability of the Djrbashian SF (\ref{eq:ML_Der_Recursive}), we observe that since the derivatives of the ML function have finite values at $z=0$, see Eq. (\ref{eq:ML_Deriv0}), from (\ref{eq:ML_Der_Recursive}) it follows 
\begin{equation}\label{eq:DjrbashianCloseZero}
	\sum_{j=0}^{k} c_{j}^{(k)} E_{\alpha,\beta-j}(z) = {\cal O} \bigl(z^{k}\bigr) 
	, \quad |z|\to 0 .
\end{equation}

As a consequence, differences of almost equal values are involved by (\ref{eq:ML_Der_Recursive}) when $|z|$ is small; the unavoidable numerical cancellation, further magnified since the division by $z^{k}$, makes this summation formula not reliable for values of $z$ close to the origin, especially for derivatives of high order.

To overcome these difficulties we can exploit again the relationship (\ref{eq:MLder_ML3}) between the ML derivatives and the Prabhakar function together with the formula (see \cite{Prabhakar1971})
\begin{equation}\label{eq:PrabhakarReductionGamma}
	E_{\alpha,\beta}^{k+1}(z) = 
	\frac{E_{\alpha,\beta-1}^{k}(z) + (1-\beta+\alpha k) E_{\alpha,\beta}^{k}(z)}{\alpha k} ,
\end{equation}
expressing the Prabhakar function as difference of two instances of the same function but with a smaller value of the third parameter. We are hence able to provide the following alternative SF of Prabhakar type.

\begin{prop}[Summation formula of Prabhakar type]\label{prop:ML_Der_Recursive2}
Let $\alpha>0$ and $\beta \in \Rset$. For any $k \in \Nset$ it is
\begin{equation}\label{eq:ML_Der_Recursive2}
	\frac{d^{k}}{dz^{k}} E_{\alpha,\beta}(z) = \frac{1}{\alpha^k } \sum_{j=0}^{k} c_{j}^{(k)} E_{\alpha,\alpha k + \beta-j}(z) ,
\end{equation}
where $c_{j}^{(k)}$, $j=0,1,\dots,k$, are the coefficients (\ref{eq:Coeff_Dzherbashyan}) of Proposition \ref{prop:ML_Der_Recursive}.
\end{prop}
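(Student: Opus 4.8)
The plan is to derive the Prabhakar-type formula (\ref{eq:ML_Der_Recursive2}) directly from the already-established Djrbashian-type formula (\ref{eq:ML_Der_Recursive}) of Proposition \ref{prop:ML_Der_Recursive}, exploiting the fact that the two formulas are meant to share the \emph{same} coefficients $c_{j}^{(k)}$. The bridge between them is the elementary series-splitting identity
\[
	E_{\alpha,\delta}(z) = \sum_{i=0}^{k-1} \frac{z^{i}}{\Gamma(\alpha i + \delta)} + z^{k} E_{\alpha,\alpha k + \delta}(z) ,
\]
valid for every $\delta \in \Rset$ and obtained from (\ref{eq:ML}) by separating the first $k$ terms of the defining series and re-indexing the tail by $m = n - k$. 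This is exactly the device that converts a second parameter $\delta$ into the shifted one $\alpha k + \delta$ at the price of a factor $z^{k}$ together with a polynomial correction of degree $k-1$.

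First I would apply this identity with $\delta = \beta - j$ to each summand of (\ref{eq:ML_Der_Recursive}), obtaining
\[
	\sum_{j=0}^{k} c_{j}^{(k)} E_{\alpha,\beta-j}(z)
	= \sum_{i=0}^{k-1} z^{i} \left( \sum_{j=0}^{k} \frac{c_{j}^{(k)}}{\Gamma(\alpha i + \beta - j)} \right)
	+ z^{k} \sum_{j=0}^{k} c_{j}^{(k)} E_{\alpha,\alpha k + \beta - j}(z) .
\]
The decisive step is to observe that, for every $i = 0, \dots, k-1$, the inner coefficient of $z^{i}$ vanishes. Indeed those inner sums are precisely the Taylor coefficients up to order $k-1$ of $\sum_{j} c_{j}^{(k)} E_{\alpha,\beta-j}(z)$, which is ${\cal O}(z^{k})$ by (\ref{eq:DjrbashianCloseZero}); equivalently, they are exactly the equations of the linear system for the $c_{j}^{(k)}$ recalled just before the statement (using $c_{k}^{(k)}=1$). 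Hence the entire finite correction disappears and the left-hand side collapses to $z^{k} \sum_{j} c_{j}^{(k)} E_{\alpha,\alpha k + \beta - j}(z)$. Substituting this back into (\ref{eq:ML_Der_Recursive}), the factor $z^{k}$ cancels the $z^{-k}$ already present there and yields (\ref{eq:ML_Der_Recursive2}) at once for $z \neq 0$.

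The hard part is really just this vanishing of the correction terms; once (\ref{eq:DjrbashianCloseZero}) is invoked the rest is mechanical, and the coefficients are inherited from Proposition \ref{prop:ML_Der_Recursive} with no further work. The value $z=0$ then needs only a word: both sides of (\ref{eq:ML_Der_Recursive2}) are entire in $z$ (the right-hand side is a finite combination of the entire functions $E_{\alpha,\alpha k + \beta - j}$), so agreement on $\Cset \setminus \{0\}$ forces agreement at the origin by continuity, consistently with (\ref{eq:ML_Deriv0}). As a self-contained alternative I could instead argue by induction on $k$: combining (\ref{eq:MLder_ML3}) and (\ref{eq:PrabhakarReductionGamma}) at order one gives the shift-up rule $\frac{d}{dz}E_{\alpha,\gamma}(z) = \alpha^{-1}\bigl[(1-\gamma)E_{\alpha,\alpha+\gamma}(z) + E_{\alpha,\alpha+\gamma-1}(z)\bigr]$, and differentiating the inductive hypothesis termwise with this rule, then re-indexing and collecting the coefficient of each $E_{\alpha,\alpha k + \beta - \ell}(z)$, reproduces exactly the recurrence (\ref{eq:Coeff_Dzherbashyan}); there the only care required is in matching the boundary indices $\ell = 0$ and $\ell = k$.
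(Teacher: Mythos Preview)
Your primary argument is correct and genuinely different from the paper's. The paper proceeds by induction on $k$, using (\ref{eq:MLder_ML3}) together with the Prabhakar reduction formula (\ref{eq:PrabhakarReductionGamma}) to pass from $k-1$ to $k$, and then verifying that the resulting recursion for the coefficients is exactly (\ref{eq:Coeff_Dzherbashyan}); this is precisely the alternative you sketch at the end. Your main route instead pulls (\ref{eq:ML_Der_Recursive2}) straight out of the already-proved Djrbashian formula (\ref{eq:ML_Der_Recursive}) via the series-splitting identity $E_{\alpha,\delta}(z)=\sum_{i<k} z^{i}/\Gamma(\alpha i+\delta)+z^{k}E_{\alpha,\alpha k+\delta}(z)$ and the vanishing of the $k$ correction sums, which you correctly identify with (\ref{eq:DjrbashianCloseZero}) and with the linear system stated just before the proposition. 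The advantage of your approach is that it explains transparently \emph{why} the two summation formulas share the same coefficients $c_{j}^{(k)}$, with no fresh recursion to check; the advantage of the paper's induction is that it is self-contained (it does not lean on Proposition~\ref{prop:ML_Der_Recursive}) and holds for all $z$ directly, without the separate continuity argument at $z=0$.
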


\begin{proof}
The claim is obvious for $k=0$; for $k=1$ it is a consequence of (\ref{eq:PrabhakarReductionGamma}) and then one can easily prove it by induction with algebraic transformations quite similar to the ones used to prove (\ref{eq:ML_Der_Recursive}).  \qed
\end{proof}

For small values of $|z|$, the Prabhakar SF (\ref{eq:ML_Der_Recursive2}) is expected to be less affected by round-off errors than (\ref{eq:ML_Der_Recursive}). Indeed not only the sum in (\ref{eq:ML_Der_Recursive2}) returns ${\cal O}(1)$ when $|z|\to 0$ but possible round-off errors are not amplified by division by $z^k$ as in (\ref{eq:ML_Der_Recursive}). 

In Figures \ref{fig:Fig_SF_Comparison_al06_be=10_th10} and \ref{fig:Fig_SF_Comparison_al08_be=12_th05} we compare the errors given by the SF (\ref{eq:ML_Der_Recursive}) of Djrbashian type with those provided by the SF (\ref{eq:ML_Der_Recursive2}) of Prabhakar type; here and in the following tests, the reference values for the derivatives of $E_{\alpha,\beta}(z)$ are obtained by computing the series expansion (\ref{eq:ML_Deriv}) in high-precision floating-point arithmetic with 2000 digits by using Maple 15. As we can clearly see, when $|z|$ is small the Prabhakar SF (\ref{eq:ML_Der_Recursive2}) is surely more accurate whilst for larger values of $|z|$ their performances are almost similar. 

\begin{figure}[ht]
\centering
\resizebox{0.60\hsize}{!}{\includegraphics{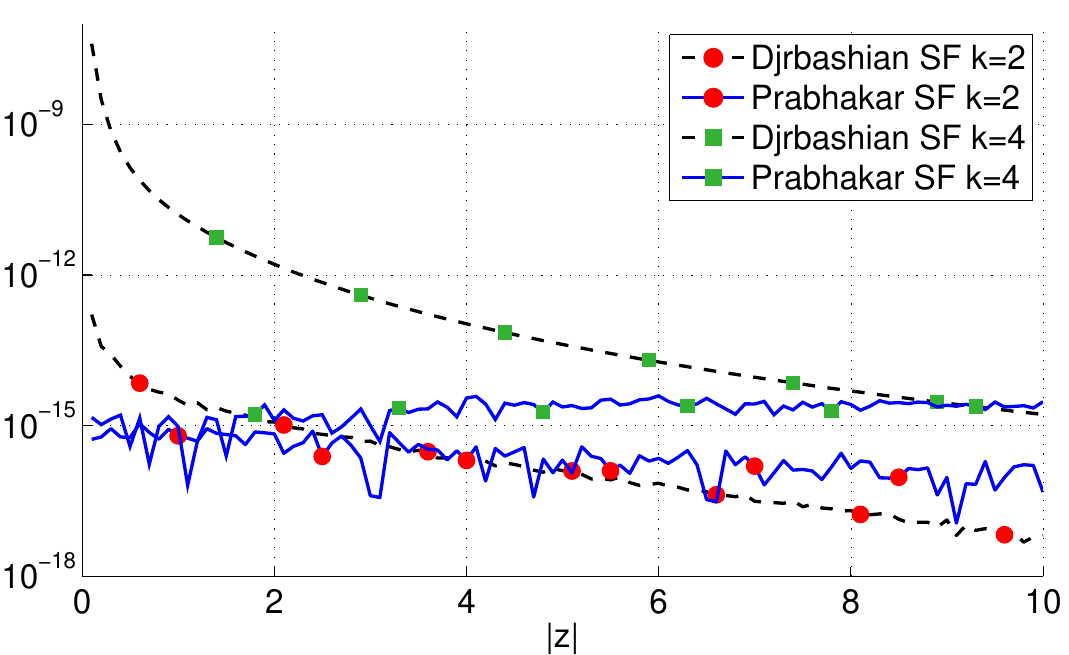} }
\caption{Comparison of errors provided by Djrbashian and Prabhakar SFs for $\alpha=0.6$, $\beta=1.0$ and $\arg(z)=\pi$.}
\label{fig:Fig_SF_Comparison_al06_be=10_th10}
\end{figure}

\begin{figure}[ht]
\centering
\resizebox{0.60\hsize}{!}{\includegraphics{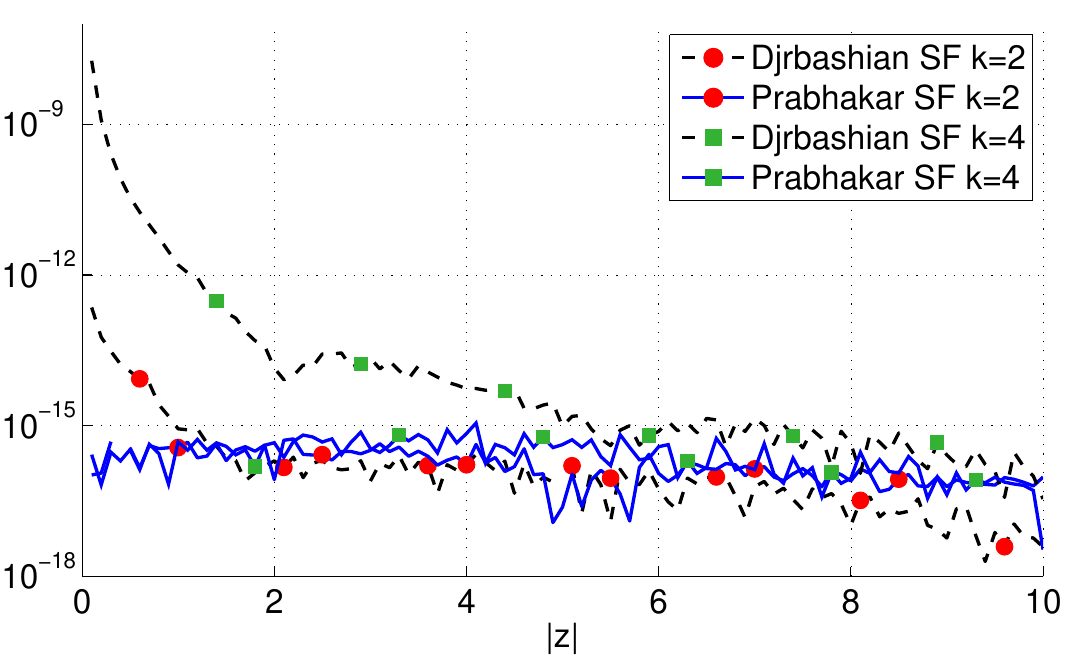}}
\caption{Comparison of errors provided by Djrbashian and Prabhakar SFs for $\alpha=0.8$, $\beta=1.2$ and $\arg(z)=\pi/2$.}
\label{fig:Fig_SF_Comparison_al08_be=12_th05}
\end{figure}

\subsection{Combining algorithms with derivatives balancing}\label{SS:CombinedAlgorithm}

The mathematical considerations in the previous subsection, supported by several experiments we conducted on a wide range of parameters $\alpha$ and $\beta$, and for arguments $z$ in different sectors of the complex plane, indicate that the Prabhakar SF (\ref{eq:ML_Der_Recursive2}) is capable of providing accurate and satisfactory results in the majority of the cases. 

Anyway we have to consider that larger errors should be expected as the degree of the derivative increases; this a consequence of the large number of operations involved by (\ref{eq:ML_Der_Recursive2}) and, in particular, of the accumulation of round-off errors. Unfortunately, also the numerical inversion of the LT can show a loss of accuracy for high order derivatives since the strong singularity in $H_k(s;z)$ may lead to large round-off errors in the integration process.

In critical situations it is advisable to adopt mixed strategies by combining different algorithms. For instance, it is natural to use the truncated series expansion (\ref{eq:ML_DerivTruncated}) for small arguments and switch to other methods as $|z|$ becomes larger; the error estimates discussed in Subsection \ref{sec:series} can be of help for implementing an automatic procedure performing this switching. 

In addition, since all methods reduce their accuracy for high order derivatives, it could be necessary to adopt a strategy operating a balance of the derivatives thus to bring the problem to the evaluation of lower order derivatives. To this purpose, Proposition \ref{prop:ML_Der_Recursive2} can be generalized in order to express $k$-th order derivatives of the ML function in terms of lower order derivatives instead of the ML function itself. 

\begin{prop}\label{prop:ML_Der_mixed}
Let $\alpha>0, \beta \in \Rset$. For any $k \in \Nset$ and $p\leq k$ it is
\begin{equation}\label{eq:ML_Der_mixed}
	\frac{d^{k}}{dz^{k}} E_{\alpha,\beta}(z) = \frac{1}{\alpha^{k-p} } \sum_{j=0}^{k-p} c_{j}^{(k-p)} \frac{d^{p}}{dz^{p}} E_{\alpha,(k-p)\alpha  + \beta-j}(z) ,
\end{equation}
where $c_{j}^{(k)}$, $j=0,1,\dots,k$, are the coefficients (\ref{eq:Coeff_Dzherbashyan}) of Proposition \ref{prop:ML_Der_Recursive}.
\end{prop}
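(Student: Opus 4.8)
The plan is to exploit the fact that Proposition~\ref{prop:ML_Der_Recursive2} already expresses an arbitrary-order derivative of $E_{\alpha,\beta}$ as a finite linear combination of (undifferentiated) ML functions with shifted second parameter, and that it holds as an identity of entire functions for \emph{every} order in $\Nset$ (note that, unlike the Djrbashian form \eqref{eq:ML_Der_Recursive}, the Prabhakar form \eqref{eq:ML_Der_Recursive2} carries no power of $z$ in the denominator and so imposes no restriction $z\neq0$). The identity \eqref{eq:ML_Der_mixed} will then follow simply by applying that formula at order $k-p$ rather than at order $k$, and recovering the remaining $p$ differentiations afterwards.

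Concretely, I would first split the differential operator as
\[
	\frac{d^{k}}{dz^{k}} = \frac{d^{p}}{dz^{p}} \circ \frac{d^{k-p}}{dz^{k-p}},
\]
which is legitimate because $E_{\alpha,\beta}$ is entire and hence infinitely differentiable. Since $p\le k$ we have $k-p\in\Nset$, so Proposition~\ref{prop:ML_Der_Recursive2} applies with $k$ replaced by $k-p$ and yields
\[
	\frac{d^{k-p}}{dz^{k-p}} E_{\alpha,\beta}(z) = \frac{1}{\alpha^{k-p}} \sum_{j=0}^{k-p} c_{j}^{(k-p)} E_{\alpha,(k-p)\alpha + \beta - j}(z),
\]
with the coefficients $c_{j}^{(k-p)}$ furnished by \eqref{eq:Coeff_Dzherbashyan}. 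Applying $\frac{d^{p}}{dz^{p}}$ to both sides and commuting the derivative through the finite sum and the constant factor $\alpha^{-(k-p)}$ then produces exactly \eqref{eq:ML_Der_mixed}.

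There is essentially no analytical obstacle here: the result is a corollary of Proposition~\ref{prop:ML_Der_Recursive2}, and the only points requiring a word of care are that the interchange of $\frac{d^{p}}{dz^{p}}$ with the summation is permitted (immediate, since the sum is finite and differentiation is linear) and that each summand $E_{\alpha,(k-p)\alpha+\beta-j}$ is itself entire, so its $p$-th derivative is well defined. As consistency checks, the case $p=0$ reproduces Proposition~\ref{prop:ML_Der_Recursive2} verbatim, while the case $p=k$ collapses the sum to its single $j=0$ term with $c_{0}^{(0)}=1$ and $\alpha^{k-p}=1$, returning the trivial identity. If one preferred to avoid invoking Proposition~\ref{prop:ML_Der_Recursive2} as a black box, an equivalent route would be an induction on $k-p$ based solely on the first-order Prabhakar reduction \eqref{eq:PrabhakarReductionGamma}, but the operator-splitting argument above is the most economical.
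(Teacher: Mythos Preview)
Your argument is correct. Splitting the operator as $\frac{d^{k}}{dz^{k}}=\frac{d^{p}}{dz^{p}}\circ\frac{d^{k-p}}{dz^{k-p}}$, invoking Proposition~\ref{prop:ML_Der_Recursive2} at order $k-p$ (with the same $\beta$, so that the coefficients $c_{j}^{(k-p)}$ are exactly those of \eqref{eq:Coeff_Dzherbashyan}), and then pushing $\frac{d^{p}}{dz^{p}}$ through the finite sum yields \eqref{eq:ML_Der_mixed} verbatim. Your remarks on the absence of any $z\neq0$ restriction, on the linearity justification, and on the sanity checks at $p=0$ and $p=k$ are all in order.

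The paper does not actually write out a proof; it merely states that the argument ``is similar to the proof of Proposition~\ref{prop:ML_Der_Recursive2} and is omitted for shortness,'' i.e.\ it envisions redoing the induction (based on the Prabhakar reduction \eqref{eq:PrabhakarReductionGamma}) from scratch. Your route is genuinely different and more economical: rather than reproving a variant of Proposition~\ref{prop:ML_Der_Recursive2} by induction, you treat Proposition~\ref{prop:ML_Der_mixed} as an immediate corollary of it via operator splitting. What the inductive approach buys is self-containment (one does not need to have established Proposition~\ref{prop:ML_Der_Recursive2} first), whereas your approach makes the logical dependence transparent and avoids duplicating work. Either is perfectly acceptable here.
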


The proof of the above result is similar to the proof of Proposition \ref{prop:ML_Der_Recursive2} and is omitted for shortness.

Thanks to Proposition \ref{prop:ML_Der_mixed} it is possible to combine the algorithms introduced in the previous subsections and employ the SF (\ref{eq:ML_Der_mixed}) with underlying derivatives evaluated by the numerical inversion of the LT. Clearly, the aim is to avoid working with high-order derivatives in the numerical inversion of the LT and, at the same time, reduce the number of terms in (\ref{eq:ML_Der_Recursive2}).

In Figure \ref{fig:Fig_ML_BalancDeriv_al06_be10_theta05_k5} we illustrate the positive effects on accuracy of performing the derivatives balancing. As we can see, the SF (\ref{eq:ML_Der_Recursive2}) tends to lose accuracy for higher order derivatives (here a $5$-th order has been considered); the numerical inversion of the LT is instead more accurate but when the integration contour is constrained to stay close to the singularity the error can rapidly increase (see the peak in the interval $[0,2]$). The derivatives balancing (operated here for $p=1$) allows to keep the error at a moderate level on the whole interval, without any worrisome peak.

\begin{figure}[ht]
\centering
\resizebox{0.6\hsize}{!}{\includegraphics{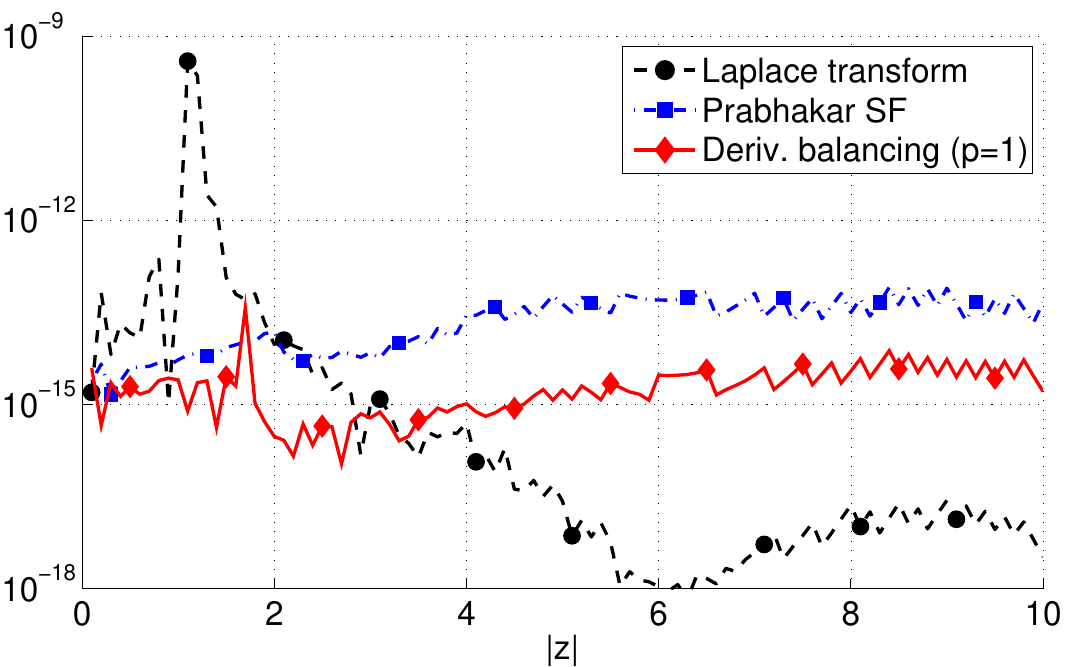}}
\caption{Comparison of errors for $5$-th derivative of the ML function when $\alpha=0.6$, $\beta=1.0$ and $\arg(z)=\pi/2$.}
\label{fig:Fig_ML_BalancDeriv_al06_be10_theta05_k5}
\end{figure}

Finally, we present a couple of tests in which the different techniques described in the paper are combined to provide accurate results for the evaluation of the derivatives of the ML function. Roughly speaking, the algorithm uses the truncated series for small arguments and hence, on the basis of the error estimate, it switches to the Prabhakar SF (\ref{eq:ML_Der_Recursive2}) using the numerical inversion of the Laplace transform according to a derivative balancing approach.

In several cases we have observed that the computation of lower order derivatives turns out to be more accurate (see Figure \ref{fig:Fig_ML_Der_Test_al06_be=06_th08}) although it is not possible to infer a general trend as we can observe from Figure \ref{fig:Fig_ML_Der_Test_al08_be=12_th05}. Anyway, in all cases (we have performed experiments, not reported here for brevity, on a wide set of arguments and parameters) the errors remain in a range $10^{-13} \sim 10^{-15}$ which can be surely considered satisfactory for applications. The plotted error is $|E-\tilde{E}|/(1+|E|)$ where $E$ is a reference value and $\tilde{E}$ the evaluated derivative. 

\begin{figure}[ht]
\centering
\resizebox{0.60\hsize}{!}{\includegraphics{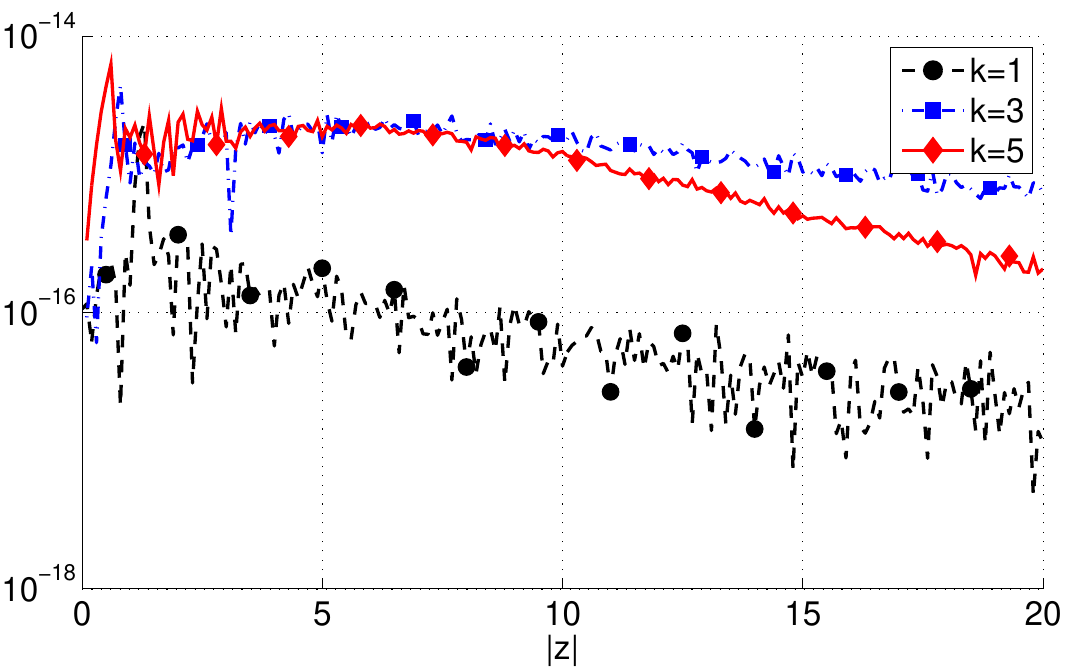}}
\caption{Errors in the computation of $k$-th derivatives of the ML function when $\alpha=0.6$, $\beta=0.6$ and $\arg(z)=0.8\pi$.}
\label{fig:Fig_ML_Der_Test_al06_be=06_th08}
\end{figure}

\begin{figure}[ht]
\centering
\resizebox{0.60\hsize}{!}{\includegraphics{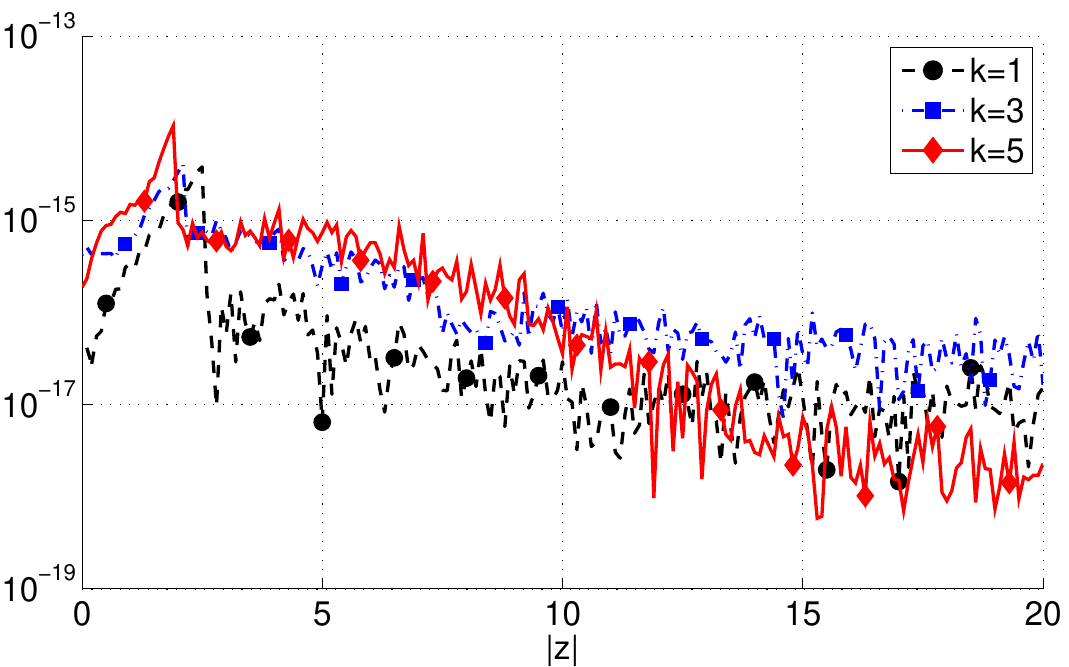}}
\caption{Errors in the computation of $k$-th derivative of the ML function when $\alpha=0.8$, $\beta=1.2$ and $\arg(z)=\pi/2$.}
\label{fig:Fig_ML_Der_Test_al08_be=12_th05}
\end{figure}


\section{Conditioning of the matrix ML function}\label{S:Conditioning}

To hone the analysis of the matrix ML function it is of fundamental importance to measure its sensitivity to small perturbations in the matrix argument; indeed, even when dealing with exact data, the rounding errors can severely affect the computation, exactly as in the scalar case. A useful tool for this analysis is the {\emph{condition number}} whose definition can be readily extended to matrix functions \cite{Higham2008}.

\begin{defn} 
For a general matrix function $f$, if $\|\cdot\|$ denotes any matrix norm, the {\emph{absolute condition number}} is defined as 
\[
\condabs(f,A)= \lim_{\varepsilon \rightarrow 0} \sup_{\|E\|\leq \varepsilon} \frac{\|f(A+E)-f(A)\|}{\varepsilon}
\]
while the {\emph{relative condition number}} is defined as 
\[
\condrel(f,A) = \lim_{\varepsilon \rightarrow 0} \sup_{\|E\|\leq \varepsilon\|A\|} \frac{\|f(A+E)-f(A)\|}{\varepsilon\|f(A)\|}
\]
being the matrix $E$ a perturbation to the original data $A$. 
\end{defn}

For the actual computation of these quantities the Fr\'echet derivative of the matrix function $f$ is of use. We recall that given a matrix function $f$ defined in $\Cset^{n\times n}$, its  Fr\'echet derivative at a point $A\in \Cset^{n\times n}$  is a linear function $L(A,E)$ such that 
\[
	f(A+E)-f(A) -L(A,E) = o(\|E\|).
\]

The norm of the Fr\'echet derivative is defined as
\[
\|L(A)\| := \max_{Z\neq 0} \frac{\|L(A,Z)\| }{\|Z\|}
\]
and it turns out to be a useful tool for the sensitivity analysis of the function itself thanks to the following result.

\begin{thm}  \cite{Higham2008}
The absolute and relative condition numbers are given by
\[
\condabs(f,A)=\|L(A)\|,
\]
\[
\condrel(f,A)=\frac{\|L(A)\| }{\|f(A)\|} \|A\|.
\]
\end{thm}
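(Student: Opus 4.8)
The plan is to derive both identities directly from the defining property of the Fr\'echet derivative, namely that $f(A+E)-f(A) = L(A,E) + o(\|E\|)$ as $\|E\|\to 0$, combined with the linearity of the map $E \mapsto L(A,E)$ and the characterization of $\|L(A)\|$ as an operator norm.

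First I would handle the absolute condition number. Substituting the Fr\'echet expansion into the difference quotient and using the triangle inequality gives, for every $E$ with $\|E\|\le\varepsilon$,
\[
\frac{\|f(A+E)-f(A)\|}{\varepsilon} \le \frac{\|L(A,E)\|}{\varepsilon} + \frac{\|o(\|E\|)\|}{\varepsilon} \le \|L(A)\| + \frac{\|o(\|E\|)\|}{\varepsilon},
\]
where the last step uses $\|L(A,E)\| \le \|L(A)\|\,\|E\| \le \|L(A)\|\,\varepsilon$. Since the remainder is $o(\|E\|)$ uniformly on the ball $\|E\|\le\varepsilon$, its contribution vanishes as $\varepsilon\to 0$, yielding $\condabs(f,A)\le \|L(A)\|$. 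For the reverse inequality I would exploit the homogeneity of $L$: picking a direction $Z$ with $\|Z\|=1$ that attains (or approaches) the maximum defining $\|L(A)\|=\max_{Z\neq 0}\|L(A,Z)\|/\|Z\|$, and setting $E=\varepsilon Z$, linearity gives $L(A,\varepsilon Z)=\varepsilon L(A,Z)$ and hence
\[
\frac{\|f(A+\varepsilon Z)-f(A)\|}{\varepsilon} = \left\| L(A,Z) + \frac{o(\varepsilon)}{\varepsilon}\right\| \longrightarrow \|L(A)\| ,
\]
so that $\condabs(f,A)\ge\|L(A)\|$. The two bounds together establish the first identity.

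The relative condition number then follows by a simple rescaling. Setting $\tilde\varepsilon = \varepsilon\|A\|$ converts the constraint $\|E\|\le\varepsilon\|A\|$ into $\|E\|\le\tilde\varepsilon$ and factorizes the quotient as
\[
\frac{\|f(A+E)-f(A)\|}{\varepsilon\|f(A)\|} = \frac{\|A\|}{\|f(A)\|}\cdot\frac{\|f(A+E)-f(A)\|}{\tilde\varepsilon} .
\]
Taking the supremum over $\|E\|\le\tilde\varepsilon$ and letting $\varepsilon\to 0$ (equivalently $\tilde\varepsilon\to 0$), the inner factor converges to $\|L(A)\|$ by the part just proved, which gives $\condrel(f,A) = \|L(A)\|\,\|A\|/\|f(A)\|$.

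The only delicate point is the interchange of the supremum with the limit $\varepsilon\to 0$: this requires that the $o(\|E\|)$ remainder be controlled uniformly over the entire ball $\|E\|\le\varepsilon$, and not merely along individual sequences of perturbations. This uniform control is precisely what the definition of the Fr\'echet derivative supplies, so no hypotheses on $f$ beyond its Fr\'echet differentiability at $A$ are needed, and the argument applies verbatim to $f=E_{\alpha,\beta}$.
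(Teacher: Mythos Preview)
Your argument is correct and is essentially the standard one found in Higham's textbook, from which the theorem is quoted. Note, however, that the paper itself does not prove this statement: it is merely cited from \cite{Higham2008} as a known result, with no proof supplied in the paper. Your write-up therefore goes beyond what the paper does, and there is nothing to compare against in the paper's body.

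Two small remarks on presentation. First, since you are in finite dimensions, the supremum $\max_{Z\neq 0}\|L(A,Z)\|/\|Z\|$ is attained on the compact unit sphere, so you may simply choose a maximizer $Z$ rather than hedging with ``attains (or approaches)''. Second, your closing paragraph correctly isolates the only nontrivial point (uniformity of the $o(\|E\|)$ remainder over the ball), and your justification is right: Fr\'echet differentiability at $A$ means precisely that for every $\delta>0$ there exists $\varepsilon_0>0$ with $\|f(A+E)-f(A)-L(A,E)\|\le\delta\|E\|$ whenever $\|E\|\le\varepsilon_0$, which is exactly the uniform bound you need to pass the supremum through the limit.
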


The computation of the Fr\'echet derivative is thus essential to derive the function conditioning. Nowadays only specific functions have effective algorithms to compute it, like the exponential, the logarithm and fractional powers \cite{DieciPapini2000,Higham2008}. For general matrix functions the reference algorithm for the Fr\'echet derivative  is due to Al-Mohy and Higham \cite{AlMohyHigham2010,HighamAlMohy2010}.
 
To give a flavor of the conditioning of the ML function we consider $25$ test matrices of dimension $10\times 10$ taken from the Matlab {\tt{gallery}}. Figure \ref{fig:Fig_ML_Matrix_Conditioning_Abs} reports the $1$-norm relative condition number $\kappa_r(E_{\alpha,1},A)$
computed by the {{\tt{funm{\_}condest1}}} routine from the Matrix Function Toolbox \cite{Higham2008}.  Amazingly, for all but the {chebspec} matrix (the one plotted in red line with small circles), the conditioning is almost independent on $\alpha$; this shows that the ML function has almost the same conditioning of the exponential function for any $\alpha$. For most of the test matrices the ML function conditioning is large; in particular, for small values of $\alpha$ the ML conditioning for the chebspec Matlab matrix is huge. An explanation could be that the Schur-Parlett algorithm does not work properly with this matrix, as stressed in \cite{Higham2008}; indeed, this matrix is similar to a Jordan block with eigenvalue 0 while its computed eigenvalues lie roughly on a circle with centre 0 and radius 0.2. The well-known ill-conditioning of Pascal matrices (for $n=10$ it is $\kappa(A)\approx 8.13\times10^9$) clearly leads to a severe ill-conditioning of the matrix function too (see the straight blue line).

In general, more insights can derive from a comparison of this conditioning with the $1$-norm condition of $A$, that we denote with $\kappa(A)$. To this purpose, Figure \ref{fig:Fig_ML_Matrix_Conditioning_Rel} displays the ratio $\kappa_r(E_{\alpha,1},A)/\kappa(A)$. From this plot we can appreciate that in most of the cases the ML function is better conditioned than the matrix argument itself, while in the remaining cases the conditioning is not too much amplified.

\begin{figure}[ht]
\centering
\resizebox{0.60\hsize}{!}{\includegraphics{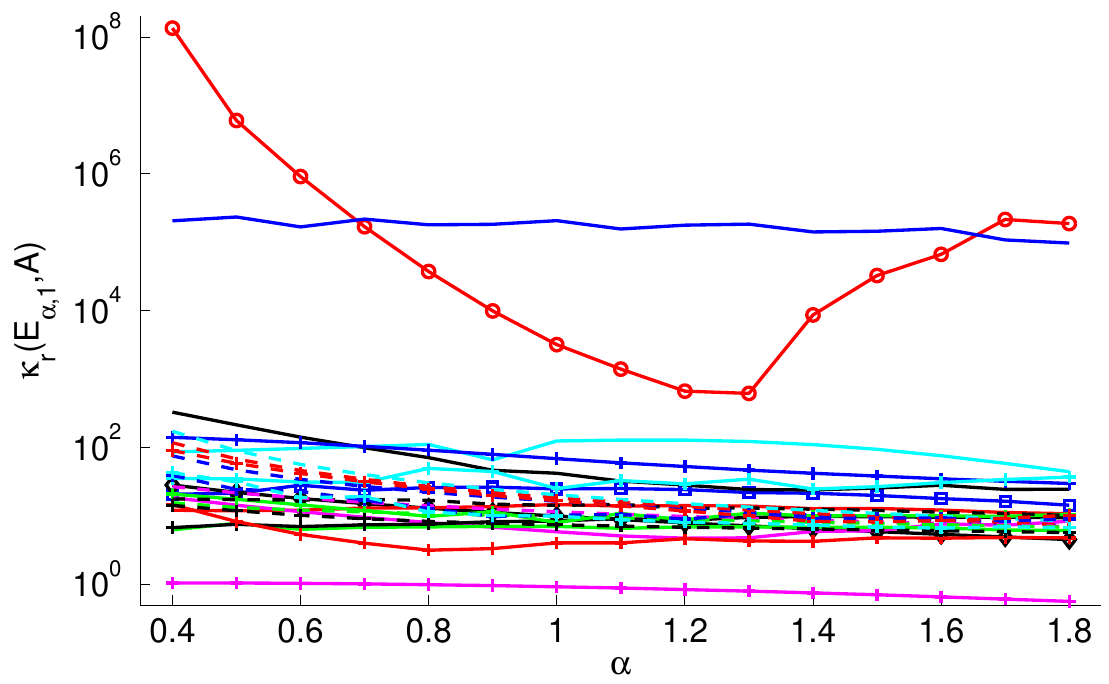}}
	\caption{Conditioning $\kappa_r(E_{\alpha,1},A)$ for $25$ matrices of dimension $10\times 10$ from the Matlab gallery. } 
	\label{fig:Fig_ML_Matrix_Conditioning_Abs} 
\end{figure}

\begin{figure}[ht]
\centering
\resizebox{0.60\hsize}{!}{\includegraphics{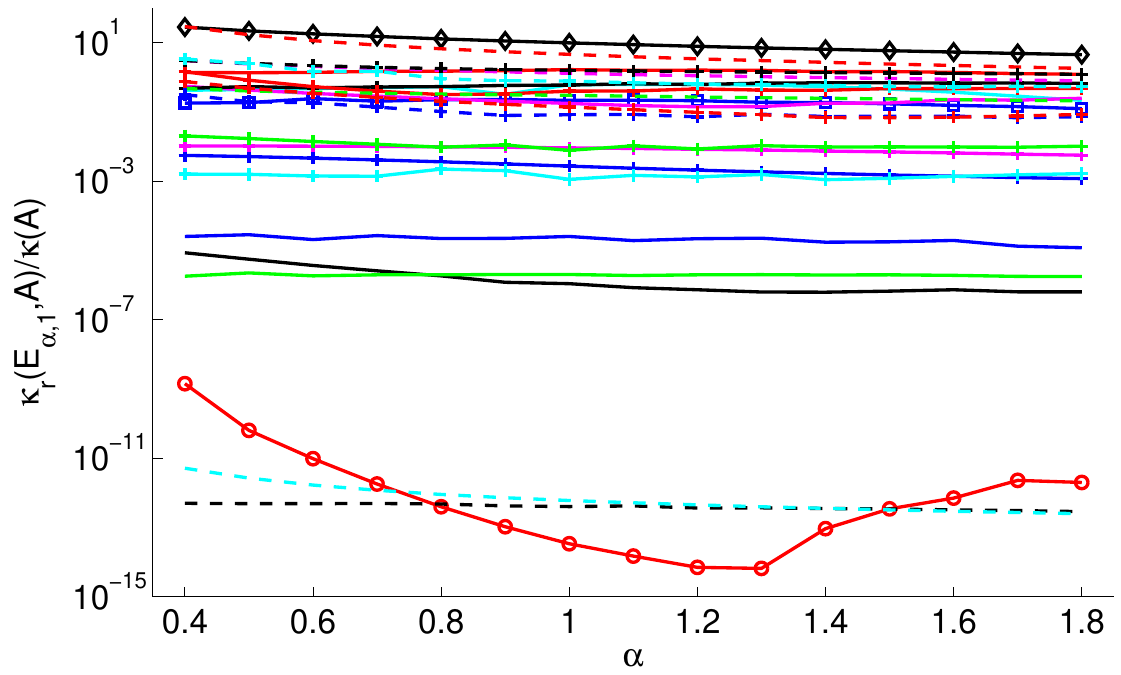}}
	\caption{Conditioning ratio $\kappa_r(E_{\alpha,1},A)/\kappa(A)$ for $25$ matrices of dimension $10\times 10$ from the Matlab gallery. } 
	\label{fig:Fig_ML_Matrix_Conditioning_Rel} 
\end{figure}

Moreover in Figure \ref{fig:Fig_ML_SelectedEigenvalues} of the next section we show that $\condrel(E_{\alpha,1},A)\epsilon$ well bounds the error between the computed and the reference values of $E_{\alpha,1}(A)$; this property seems to reveal  that our technique to compute $E_{\alpha,1}(A)$ is {\emph{normwise forward stable}}, according to the definition given in \cite{Higham2002}.


\section{Numerical experiments}\label{S:Experiments}

In addition to the numerical tests already presented in the previous sections, in this section we verify the accuracy obtained for some test matrices. All the experiments have been carried out in Matlab ver. 8.3.0.532 (R2014a) and, in order to plot the errors, also in these experiments reference values have been evaluated by means of Maple 15 with a floating-point precision of 2000 digits.

For the Schur-Parlett algorithm we use the Matlab {\tt funm} function, whilst the derivatives of the ML function are evaluated by means of the combined algorithm described in Subsection \ref{SS:CombinedAlgorithm} and making use of the derivatives balancing.  The corresponding Matlab code, which will be used in all the subsequent experiments, is available in the file exchange service of the Mathworks website\footnotemark\footnotetext{www.mathworks.com/matlabcentral/fileexchange/66272-mittag-leffler-function-with-matrix-arguments}. The plotted errors between reference values $E$ and approximations $\tilde{E}$ are evaluated as $\|E-\tilde{E}\|/(1+\|E\|)$, where $\| \cdot \|$ is the usual Frobenius norm.

\subsection*{Example 1: the Redheffer matrix}

For the first test we use the Redheffer matrix from the matrix gallery of Matlab. This is a quite simple binary matrix whose elements $a_{i,j}$ are equal to 1 when $j=1$ or $i$ divides $j$, otherwise they are all equal to 0; however, the eigenvalues are highly clustered: for a $n\times n$ Redheffer matrix the number of eigenvalues equal to 1 is exactly $n- \left\lfloor \log_2 n  \right\rfloor -1$ (see \cite{BarrettJarvis1992}). As a consequence, computing $E_{\alpha,\beta}(-A)$ demands the evaluation of high order derivatives (up to the 24-th order in our experiments) thus making this matrix appealing for test purposes.

In Figure \ref{fig:Fig_RedHeff_Matrix} we observe the error for some values of $\alpha$ with $n \times n$ Redheffer matrices of increasing size ($n$ is selected in the interval $[4,20]$ and $\beta=1$ is always used). As we can clearly see, the algorithm is able to provide an excellent approximation, with an error very close to machine precision, also for matrices of notable size.   

\begin{figure}[ht]
\centering
\resizebox{0.60\hsize}{!}{\includegraphics{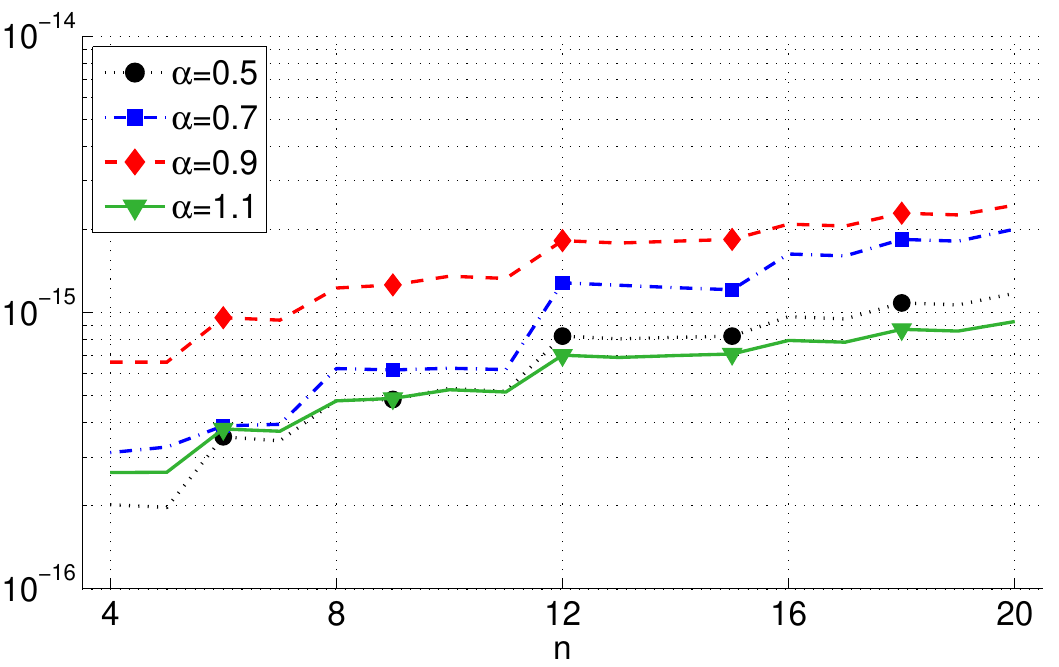}}
\caption{Errors in the computation of $E_{\alpha,1}(-A)$ with $A$ the Redheffer matrix as its dimension changes}
\label{fig:Fig_RedHeff_Matrix}
\end{figure}


\subsection*{Example 2: matrices with selected eigenvalues}\label{SS:selectedEigs}

The second experiment involves 4 different matrices of size  $40 \times 40$ suitably built in order to have eigenvalues with moderately high multiplicities. In practice we fix some values and we consider diagonal matrices having them as principal entries, repeated according to the multiplicities we want (as listed in Table \ref{tbl:EigMulMatrices40}). Then, by  similarity transformations, we get the full matrices with the desired spectrum. To reduce rounding errors for the similarity transformations  we use orthogonal matrices provided by the {{\tt{gallery}} Matlab function.

\begin{table}[h!]
\footnotesize
\centering
\begin{tabular}{l|c|c} \hline 
 & Eigenvalues (multiplicities) & Max derivative \\
 & & order \\ \hline
Matrix 1 & $\pm{1.0}(5)$ \; $\pm{1.0001}(4)$ \; $\pm 1.001 (4)$ \; $\pm 1.01 (4)$ \; $\pm 1.1 (3)$ & 15 \\ \hline
Matrix 2 & $\pm 1.0 (8)$ \; $2 (8)$ \, $-5 (8)$ \, $-10 (8)$ & 3 \\ \hline
Matrix 3 & $-1(2)$ \; $-5(2)$ \, $1\pm10\iu (6)$ \; $-4\pm1.5\iu (6)$ \; $\pm5\iu (6)$ & 3 \\ \hline
Matrix 4 & $1 (4)$ \; $1.0001 (4)$ \; $1.001 (4)$ \; $1\pm10\iu (7)$ \; $-4\pm1.5\iu (7)$ & 7 \\ \hline
\end{tabular}
\caption{Eigenvalues (with multiplicities) for the $40 \times 40$ matrices of the Example 2. }\label{tbl:EigMulMatrices40}
\normalsize
\end{table}

The maximum order of the derivatives required by the computation is indicated in the last column of Table \ref{tbl:EigMulMatrices40}. As we can observe from Figure \ref{fig:Fig_ML_SelectedEigenvalues} (solid lines), despite the large size of the matrices and the high clustering of the eigenvalues, it is possible to evaluate $E_{\alpha,\beta}(A)$ with a reasonable accuracy. In the same plot we can also appreciate (dotted lines) how the bounds $\condrel(E_{\alpha,1},A)\epsilon$ (see Section \ref{S:Conditioning}) give a reasonable estimate for the error.

\begin{figure}[ht]
\centering
\resizebox{0.60\hsize}{!}{\includegraphics{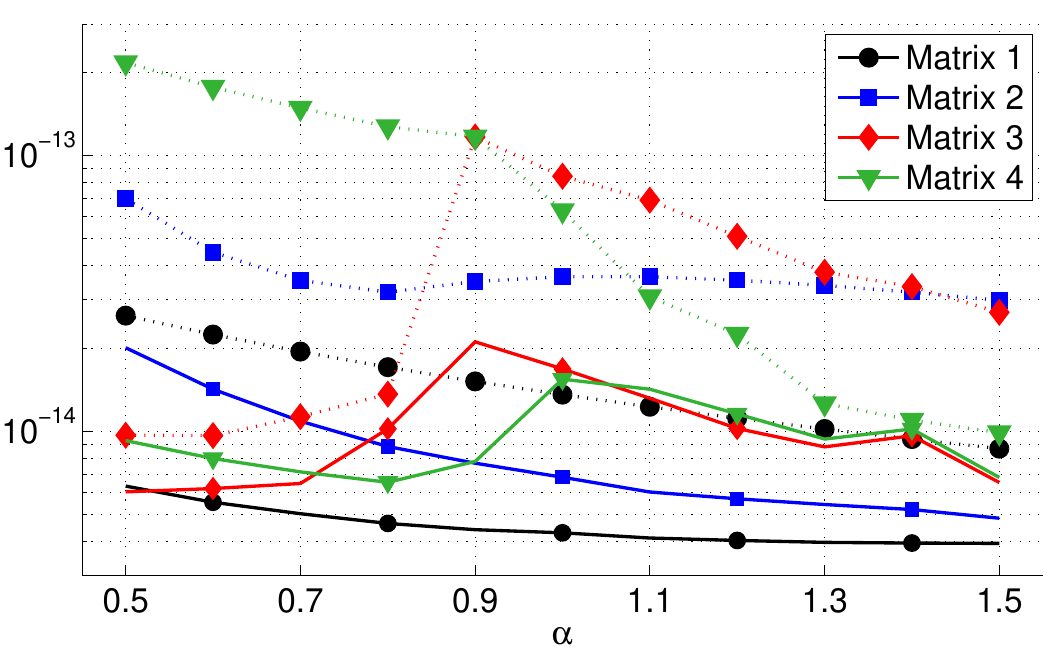}}
\caption{Relative errors (solid lines) for $E_{\alpha,1}(A)$ for the four test matrices $A\in\Rset^{40\times 40}$ of Example 2 and conditioning $\condrel(E_{\alpha,1},A)\epsilon$ (dotted lines).}
\label{fig:Fig_ML_SelectedEigenvalues}
\end{figure}

\subsection*{Example 3: solution of a multiterm FDE}

In the third experiment we consider an application to the multiterm FDE  
\begin{equation}\label{eq:MultiTermFDE_example}
	2 y(t) + 6 D^{\alpha}_{0} y(t) + 7 D^{2\alpha}_{0} y(t) + 4 D^{3\alpha}_{0} y(t) + D^{4\alpha}_{0} y(t) = f(t) 
\end{equation}
with homogeneous initial conditions, which is first reformulated in terms of the linear system (\ref{eq:MultiTermFDESystem}) with a companion coefficient matrix $A$, of size $16\times16$, with both simple and double eigenvalues. The reference solution, for $\alpha=0.8$ and an external source term $f(t) = - t^2/2 + 2t$, is shown in Figure \ref{fig:ML_RefMat_FDEMulti_al08_Sol} on the interval $[0,6]$. 

\begin{figure}[ht]
\centering
\resizebox{0.60\hsize}{!}{\includegraphics{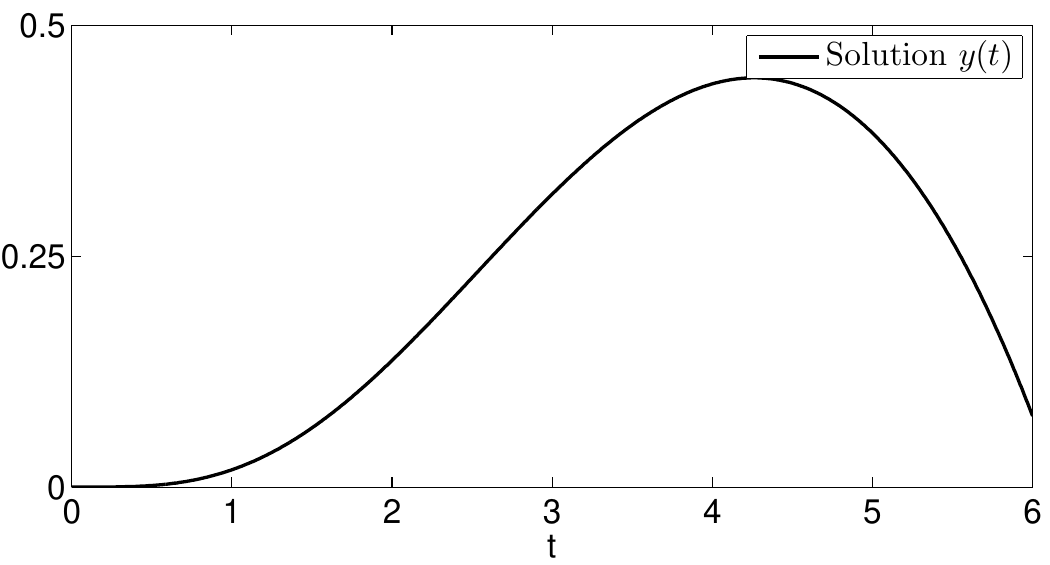}}
\caption{Solution of the multiterm FDE (\ref{eq:MultiTermFDE_example}) for $\alpha=0.8$.}
\label{fig:ML_RefMat_FDEMulti_al08_Sol}
\end{figure}

In Figure \ref{fig:ML_RefMat_FDEMulti_al08_Err} we compare the error of the solution evaluated by using the matrix ML function in (\ref{eq:MultitermFDESpectralSolution}) with those obtained by a step-by-step trapezoidal product integration (PI) rule generalized to multiterm FDEs according the
ideas discussed in \cite{DiethelmLuchko2004}; in the plot we simply denote this approach as ``Trapez. PI'' with indicated the step-size $h$ used for the computation.

\begin{figure}[ht]
\centering
\resizebox{0.60\hsize}{!}{\includegraphics{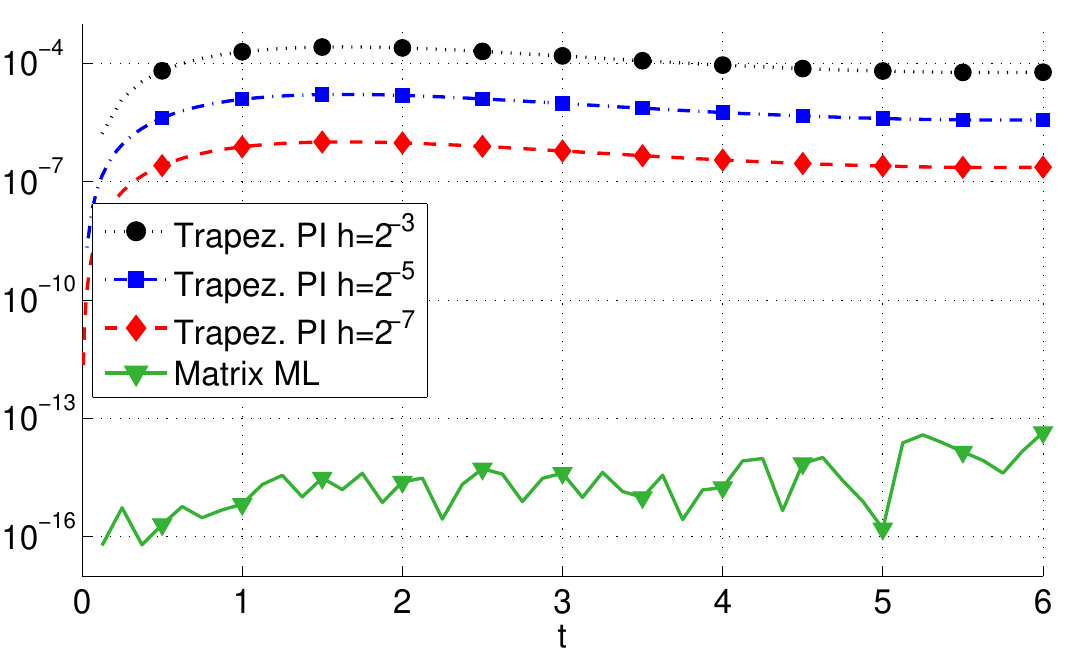}}
\caption{Errors in the solution of the multiterm FDE (\ref{eq:MultiTermFDE_example}) for $\alpha=0.8$.}
\label{fig:ML_RefMat_FDEMulti_al08_Err}
\end{figure}

Also in this case we observe the excellent results obtained thanks to the use of  matrix ML functions. We must also note that in order to achieve a comparable accuracy, the Trapez. PI would require very small step-sizes, with a remarkable computational cost especially for integration on intervals of large size; the use of matrix ML functions instead allows to directly evaluate the solution at any time $t$.

\section{Concluding remarks}

In this paper we have discussed the evaluation of the ML function with matrix arguments and illustrated some remarkable applications. Since one of the most efficient algorithms for the evaluation of matrix functions requires the evaluation of the derivatives of the original scalar function, a large portion of the paper has been devoted to present different methods for the accurate evaluation of derivatives of the ML function, a subject which, as far as we know, has not been faced before except for first derivatives. We have also discussed some techniques for combining the different methods in an efficient way with the aim of devising an algorithm capable of achieving high accuracy with matrices having any kind of spectrum. The analysis on the conditioning has shown that it is possible to keep errors under control when evaluating matrix ML functions. Finally, the numerical experiments presented at the end of the paper have highlighted the possibility of evaluating the matrix ML function with great accuracy also in the presence of not simple matrix arguments.




\end{document}